\crefname{assumption}{assumption}{assumptions}
\crefname{thm}{theorem}{theorems}
\crefname{lem}{lemma}{lemmas}
\crefname{cor}{corollary}{corollaries}
\crefname{prop}{proposition}{propositions}
\Crefname{theorem}{Theorem}{Theorems}
\crefname{conjecture}{conjecture}{conjectures}
\newcommand{\G}{\Gamma}
\newcommand{\tM}{\tilde{M}}
\newcommand{\na}{\nabla}
\newcommand{\R}{\mathbb{R}}
\newcommand{\vp}{\varphi}
\newcommand{\Sing}{\mathrm{Sing}\left(M,\Gamma\right)}
\newcommand{\di}{\mathrm{div}}
\renewcommand{\epsilon}{\varepsilon}
\newcommand{\eps}{\varepsilon}
\newtheorem{thm}{Theorem}[section]
\newtheorem{prop}[thm]{Proposition}
\newtheorem{lemma}[thm]{Lemma}
\newtheorem{definition}[thm]{Definition}
\newtheorem{proposition}[thm]{Proposition}
\newtheorem{example}[thm]{Example}
\newtheorem{rem}[thm]{Remark}
\newtheorem{theorem}[thm]{Theorem}
\date\today
\author{Nils Caillerie
\footnote{Department of mathematics and statistics, Georgetown University, Saint Mary's Hall, 3700 O Street NW, Washington, DC 20057, United States of America. E-mail: \texttt{nc691@georgetown.edu}}}
\begin{document}
\title{Large deviations of a forced velocity-jump process with a Hamilton-Jacobi approach}
\maketitle

\begin{abstract}
We study the dispersion of a particle whose motion dynamics can be described by a forced velocity jump process. To investigate large deviations results, we study the Chapman-Kolmogorov equation of this process in the hyperbolic scaling $(t,x,v)\to(t/\eps,x/\eps,v)$ and then, perform a Hopf-Cole transform which gives us a kinetic equation on a potential. We prove the convergence of this potential to the solution of a Hamilton-Jacobi equation. The hamiltonian can have a $C^1$ singularity, as was previously observed in this kind of studies. This is a preliminary work before studying spreading results for more realistic processes.
\end{abstract}
\noindent{ \bf Key-words:}  Kinetic equations, Hamilton-Jacobi equations, large deviations, perturbed test function method, Piecewise Deterministic Markov Process.\\
\noindent{\bf AMS Class. No:} {35Q92, 45K05, 35D40, 35F21}

\section{Introduction}

In this paper, we study the dispersion in $\R^d$ of a particle whose motion dynamics is described by the following piecewise deterministic Markov process (PDMP). During the so-called "run phase" (\em i.e \em the deterministic part), the particle is moving in $\R^d$ and is submitted to a force whose intensity and direction are given by the vector $\G$, which only depends on the insantaneous velocity of the particle. Therefore, its position $\mathcal{X}_s$ and velocity $\mathcal{V}_s$ at time $s$ are given by the following system of ODEs
\begin{equation*}
\begin{cases}
\overset{\cdot}{\mathcal{X}}_s=\mathcal{V}_s,\\
\overset{\cdot}{\mathcal{V}}_s=\Gamma(\mathcal{V}_s).
\end{cases}
\end{equation*}

We shall call the measure space $(V,\nu)$ the set of admissible velocities. After a random exponential time with mean 1, a "tumble" occurs: the particle chooses a new velocity at random on the space $V$, independently from its last velocity. The law of the velocity redistribution process is given by the probability density function $M$. The particle enters a new running phase which will again last for a random exponential time of parameter 1, and so on.

The Kolmogorov equation of this process is the following conservative kinetic equation:
\begin{equation}\label{eq1}
\partial_t f +v\cdot \na_x f + \di_v\left( \Gamma f \right)= M(v)\rho - f,\quad (t,x,v)\in \R_+\times\R^d\times V,
\end{equation}
where $\rho$ is the macroscopic density of $f$:
\begin{equation*}
\rho(t,x):=\int_V f(t,x,v)d\nu(v).
\end{equation*}
We assume that $\G d\overrightarrow{S}$ is the null measure, where the vector $\overrightarrow{S}(v)$ is the normal vector to $\partial V$ at point $v\in \partial V$. This condition guarantees that the total mass of the system is conserved thanks to Ostrogradsky's Theorem. We also assume that $V$ is a compact manifold of $\R^d$ with a (possibly empty) boundary. In the case where the boundary of $V$ is not empty, for every function $g$ from $V$ to $\mathbb{R}$, we define (when possible) $\Gamma\cdot \nabla_v g$ on $\partial V$ as follows:
\begin{equation*}
(\Gamma\cdot \nabla_v g)(w)=\left. \frac{d}{ds} g(\gamma_s)\right\vert_{s=0},
\end{equation*}
for $\gamma$ in $V$ such that $\gamma(0)=w$ and $\overset{\cdot}{\gamma_s}=\Gamma(\gamma_s)$ for all $s$ in $[-\delta,\delta]$. For a function $G$ from $V$ to $\mathbb{R}^d$, we define $\di_v (\Gamma G)$ on $\partial V$ (when possible) as
\begin{equation*}
\di_v (\Gamma G)(w)= \sum_i(\Gamma \nabla_v G_i)(w).
\end{equation*}

 The function $M\in C^0(V)$ is assumed to satisfy
\begin{equation}\label{infM}
\underset{v\in V}{\mathrm{min}}\,M(v)>0.
\end{equation}

The so-called \em force term \em $\Gamma$ is a lipschitz-continuous function of $v$. We assume that there exists $\alpha>0$ such that
\begin{equation}\label{divgamma}
1+\di\Gamma\left(v\right)\geq\alpha>0,\quad\forall v\in V.
\end{equation}
We introduce the flow of $-\Gamma$:
\begin{equation}\label{flowgamma}
\begin{cases}
\overset{\cdot}{\phi^v_s}=-\Gamma\left(\phi^v_s\right),\\
\phi^v_0=v,
\end{cases}
\end{equation}
We assume that it satisfies a Poincar\'e-Bendixson condition in the sense that, for all $v\in V$, the limit set of orbit of $v$ is either a zero of $-\Gamma$ or a periodic orbit of $-\Gamma$. In other words,
\begin{equation}\label{Poincare}
\forall v \in V,\quad \exists w_0 \in V\mbox{ and }T\geq0\mbox{ such that }\phi^{w_0}_{T}=w_0\mbox{ and } \bigcap_{t>0}\overline{\left\{ \phi^v_s,\: s\geq t \right\}}=\left\{ \phi^{w_0}_s,\: 0\leq s \leq T \right\}.
\end{equation}
Finally, we assume the following mixing property:
\begin{equation}\label{mixing}
\forall v \in V,\;\exists w(v)\in V\;\mbox{ such that }\;\forall F\in C^0(V,\R),\quad\underset{t\to+\infty}{\mathrm{lim}}\;\frac{1}{t}\int_0^t F(\phi^v_s)ds=F(w).
\end{equation}
Note that, thanks to the Poincar\'e -Bendixson condition (\ref{Poincare}), we already get the existence of a $w(v)$ in the convex hull of $V$ such that $\frac{1}{t}\int_{[0,t]}F(\phi^v_s)ds\to F(w)$. Here, we assume furthermore that this "representative" of $v$ can be chosen in $V$, even when $V$ is not convex.

In order to study large deviations results for this process, we use the method of geometric optics \cite{evans_pde_1987,freidlin_geometric_1986}. We study the rescaled function $f^\eps(t,x,v):=f\left(\frac{t}{\eps},\frac{x}{\eps},v\right)$, which satisfies
\begin{equation*}
\partial_t f^{\eps}+v\cdot \na f^{\eps} +\frac{1}{\eps}\di_v\left(\G f^{\eps}\right)=\frac{1}{\eps}\left(M(v)\rho^{\eps}-f^{\eps}\right),\quad(t,x,v)\in\R_+\times\R^d\times V.
\end{equation*}
The function $f^{\eps}$ quickly relaxes towards $\tM$, the solution of
\begin{equation}\label{tm}
\begin{cases}
\di_v\left(\G(v) \tM(v)\right)= M(v)\int_V \tM(v')d\nu(v')-\tM(v),\\
\int_V \tM(v')d\nu(v')=1.
\end{cases}
\end{equation}
We introduce the following WKB ansatz
\begin{equation*}
\varphi^{\eps}(t,x,v):=-\eps \mathrm{log}\left(\frac{f^{\eps}(t,x,v)}{\tM}\right),\quad\mbox{or equivalently,}\quad f^{\eps}(t,x,v)=\tM e^{-\frac{\varphi^{\eps}(t,x,v)}{\eps}}.
\end{equation*}
Then, $\varphi^{\eps}$ satisfies
\begin{equation}\label{main}
\partial_t \varphi^{\eps}+v\cdot \na \vp^{\eps} +\frac{\G}{\eps}\cdot \na_v \varphi^{\eps}=\frac{M}{\tM}\int_V\tM(v')\left(1-e^{\frac{\vp^{\eps}-\vp'^{\eps}}{\eps}}\right)d\nu(v'),
\end{equation}
where (here and until the end) $\vp'^{\eps}$ stands for $\vp^{\eps}(t,x,v')$.

The main result of this paper is that $(\varphi^{\eps})_\eps$ converges to the viscosity solution of some Hamilton-Jacobi equation.

\subsection*{Motivations and earlier related works}

The motivation of this work comes from the study of concentration waves in bacterial colonies of \em Escherichia coli\em. Kinetic models have been proposed to study the Run \& Tumble motion of the bacterium at the mesoscopic scale in \cite{alt_biased_1980,stroock_stochastic_1974}. More recently, it has been established that these kinetic models are more accurate than their diffusion approximations to describe the speed of a colony of bacteria in a channel of nutrient \cite{saragosti_directional_2011}. This has raised some interest on the study of front propagation in kinetic models driven by chemotactic effect \cite{calvez_chemotactic_2016} but also by growth effect \cite{bouin_hamilton-jacobi_2015,bouin_propagation_2015,bouin_spreading_2017}. Our goal is to explore those studies further, by considering kinetic equations with a force term, in view of studying propagation of biological species with an effect of the environment (one could think of fluid resistance of water for bacteria, for example). A physically relevant force term may not satisfy all the assumptions of the present paper, mostly because of (\ref{divgamma}), but our result and methods can be adapted for different force terms. Therefore, our study should be considered as a preliminary work before studying more realistic models.

When $\G\equiv 0$, a convergence result for $(\varphi^\eps)_\eps$ already exists. The question has originally been solved in \cite{bouin_kinetic_2012} by Bouin and Calvez who proved convergence of $(\vp^\eps)_\eps$ to the solution of a Hamilton-Jacobi equation with an implicitly defined hamiltonian. Their result, however, only holds in dimension 1, since the implicit formulation of the hamiltonian may not have a solution. It was then generalized to higher dimensions by the author in \cite{caillerie_large_2017-1}. The proof relied on the establishment of uniform (with respect to $\eps$) \em a priori \em bounds on the potential $\varphi^\eps$, which may not hold in our situation. If one requires that $\mathrm{div}\Gamma=0$, the proof of \cite{caillerie_large_2017-1} can be adapted to our situation since one can establish those \em a priori \em bounds (see \cite{caillerie_stochastic_2017}, Chapter 3).

When the velocity set is unbounded and $\Gamma\equiv 0$, one observes an acceleration of the front of propagation, which highlights the difference between the kinetic model and its diffusion approximation. Due to this acceleration, the hyperbolic scaling is no longer the right one to follow the front. In the special case where $M$ is gaussian and for the scaling $(t,x,v)\to(\frac{t}{\eps},\frac{x}{\eps^{3/2}},\frac{v}{\eps^{1/2}})$ the Hamilton-Jacobi limit was performed by Bouin, Calvez, Grenier and Nadin in \cite{bouin_large_2016}.

As was previously mentioned, spreading can also been driven by growth effect. Propagation in a similar model, without the force term but with a reaction-term of KPP-type was investigated by Bouin, Calvez and Nadin in \cite{bouin_propagation_2015}. They established the existence of travelling wave solutions in the one-dimensional case. Interestingly enough, the speed of propagation they established differed from the KPP speed obtained in the diffusion approximation. Their result was generalized to the higher velocity dimension case by Bouin and the author in \cite{bouin_spreading_2017}. In the present paper, we will use the method of geometric optics \cite{evans_pde_1987,freidlin_geometric_1986}, the half-relaxed limits method of Barles and Perthame \cite{barles_exit_1988} and the perturbed test function method of Evans \cite{evans_perturbed_1989} in a similar fashion as in \cite{bouin_spreading_2017,bouin_hamiltonjacobi_2015}. The Hamilton-Jacobi framework can also be used in other various situations involving population dynamics (not necessarily structured by velocity) \cite{bouin_invasion_2012,bouin_hamiltonjacobi_2015,mirrahimi_time_2015,mirrahimi_asymptotic_2015,gandon_hamiltonjacobi_2017}.
%
%

\subsection*{Main result}

To identify a candidate for the limit, let us assume formally that this limit $\varphi^0:=\underset{\eps\to0}{\mathrm{lim}}\,\varphi^{\eps}$ is independent of the velocity variable and that the convergence speed is of order 1 in $\eps$, which would mean that there exists a function $\eta$ such that
\begin{equation}\label{hilbert}
\varphi^{\eps}(t,x,v)=\varphi^0(t,x)+\eps \eta(t,x,v)+\mathcal{O}(\eps^2).
\end{equation}
Plugging (\ref{hilbert}) into \Cref{main}, we get formally at the order $\eps^0$
\begin{equation}\label{prespec}
\partial_t \varphi^0 +v\cdot \na_x \varphi^0+\G\cdot\na_v \eta = \frac{M}{\tM}\int_V \tM'\left(1-e^{\eta-\eta'}\right)d\nu(v').
\end{equation}
When $t$ and $x$ are fixed, (\ref{prespec}) is a differential equation in the variable $v$. Let us set $p:=\na_x\varphi^0(t,x)$, $H:=-\partial_t \varphi^0(t,x)$ and $Q(v):=e^{-\eta(v)}$. Then, $Q$ satisfies
\begin{equation}\label{probspec}
\begin{cases}
HQ(v)=\left(v\cdot p - \frac{M(v)}{\tM(v)}\right)Q(v)-\G(v)\cdot\na_v Q(v)+\frac{M(v)}{\tM(v)}\int_V \tM'Q'd\nu(v'),&v\in V,\\
Q>0.
\end{cases}
\end{equation}

For fixed $p$, this is a spectral problem where $Q$ and $H$ are viewed as an eigenvector and the associated eigenvalue. We will discuss the resolution of this spectral problem in \Cref{identification}. This resolution motivates the introduction of the following hamiltonian.
\begin{definition}\label{defiprincipale}
For all $p\in\R^d$, we set
\begin{equation}\label{hamiltonian}
\mathcal{H}(p):=\mathrm{inf}\left\{ H\in\R,\quad \int_V\tM(v')Q_{p,H}(v')d\nu(v') \leq 1 \right\},
\end{equation}
where
\begin{equation}\label{defQ}
Q_{p,H}\left(v\right):=\int_0^{+\infty}\frac{M(\phi^v_t)}{\tM(\phi^v_t)}\mathrm{exp}\left(-\int_0^t\left(\frac{M(\phi^v_s)}{\tM(\phi^v_s)}+H-\phi^v_s\cdot p\right)ds\right)dt,
\end{equation}
and $\phi$ is the flow of $-\Gamma$:
\begin{equation}\label{flow}
\begin{cases}
\overset{\cdot}{\phi^v_s}=-\Gamma\left(\phi^v_s\right),\\
\phi^v_0=v.
\end{cases}
\end{equation}
\end{definition}

As in \cite{coville_singular_2013-1}, this spectral problem may not have a solution in $C^1(V)$ and one may need to solve it in the set of positive measures (one can refer to \cite{caillerie_large_2017-1} where a similar situation occurs). Therefore, let us define the so-called singular set of $M$ and $\Gamma$, that is the set where the spectral problem has no solution in $C^1(V)$:
\begin{definition}
We call "Singular set of $M$ and $\Gamma$" the set
\begin{equation}\label{sing}
\Sing:=\left\{p\in\R^d,\; \left\{H\in \R,\;1<\int_V\tM'Q'_{p,H}d\nu(v')<+\infty \right\}=\emptyset \right\}.
\end{equation}
\end{definition}

Let us now state our main result.
\begin{theorem}\label{thm}
Let us assume that (\ref{infM}), (\ref{divgamma}), (\ref{Poincare}) and (\ref{mixing}) hold and that $\tM$ satisfies (\ref{tm}). Let $\varphi^{\eps}$ satisfy \Cref{main}. Let us assume furthermore that the initial condition is well-prepared: $\varphi^{\eps}(0,x,v):=\varphi_0(x)\geq 0$. Then, the function $\varphi^{\eps}$ converges uniformly locally toward some function $\varphi^0$ which is independent from $v$. Moreover, $\varphi^0$ is the viscosity solution of the Hamiton-Jacobi equation
\begin{equation}\label{HJ}
\begin{cases}
\partial_t \varphi^0+\mathcal{H}\left(\na_x \varphi^0\right)=0,& (t,x)\in \mathbb{R}_+^*\times \mathbb{R}^d,\\
\varphi^0(0,\cdot)=\varphi_0.
\end{cases}
\end{equation}
where $\mathcal{H}$ is defined as in \Cref{defiprincipale}.
\end{theorem}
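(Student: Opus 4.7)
I would follow the combination of the half-relaxed limits method of Barles--Perthame with the perturbed test function method of Evans, adapted to the kinetic framework as in \cite{bouin_kinetic_2012,caillerie_large_2017-1,bouin_spreading_2017}. First I would establish that $(\vp^\eps)_\eps$ is locally uniformly bounded: the lower bound $\vp^\eps\geq 0$ follows from the maximum principle applied to \Cref{main} together with $\vp_0\geq 0$, while a local upper bound is obtained by freezing $\vp'^\eps$ in the integral source and integrating along the characteristics $\dot t=1,\ \dot x=v,\ \dot v=\G(v)/\eps$ (whose $v$-projection is $\phi^v_{-s/\eps}$), using \eqref{divgamma} to keep them confined in $V$. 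Once these bounds are in hand, define
\[
\overline{\vp}(t,x,v) := \limsup_{(s,y,w,\eps')\to(t,x,v,0^+)} \vp^{\eps'}(s,y,w),\qquad \underline{\vp}(t,x,v) := \liminf_{(s,y,w,\eps')\to(t,x,v,0^+)} \vp^{\eps'}(s,y,w).
\]

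\textbf{Independence from $v$.} I would next show that $\overline{\vp}$ and $\underline{\vp}$ do not depend on $v$. Multiplying \Cref{main} by $\eps$ and testing against a smooth $\psi(v)$ at a local extremum in $v$, the dominating term $\G\cdot\na_v\psi$ is forced to have a definite sign; combined with integration along the flow $\phi^v_s$, the Poincar\'e--Bendixson condition \eqref{Poincare} and the mixing assumption \eqref{mixing} then propagate the value of $\overline{\vp}(t,x,\cdot)$ (resp.\ $\underline{\vp}(t,x,\cdot)$) to the common "representative" $w(v)\in V$. This is exactly where \eqref{mixing} is essential.

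\textbf{Sub/super-solution property via perturbed test functions.} The heart of the proof is to show that, as functions of $(t,x)$ alone, $\overline{\vp}$ is a viscosity subsolution and $\underline{\vp}$ a supersolution of \eqref{HJ}. Pick smooth $\psi(t,x)$ and a strict local maximum $(t_0,x_0)$ of $\overline{\vp}-\psi$, and set $p=\na_x\psi(t_0,x_0)$, $H=-\partial_t\psi(t_0,x_0)$. When $p\notin\Sing$, the spectral problem \eqref{probspec} at $H=\mathcal{H}(p)$ admits a $C^1$ solution $Q=Q_{p,\mathcal{H}(p)}$ with $\int_V\tM'Q'\,d\nu=1$, and I would employ the perturbed test function $\psi^\eps(t,x,v):=\psi(t,x)-\eps\log Q(v)$. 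At a maximizer of $\vp^\eps-\psi^\eps$, plugging into \Cref{main} and invoking the explicit representation \eqref{defQ} of $Q$ as an integral along $\phi$ gives $H\leq\mathcal{H}(p)$ in the limit $\eps\to 0$. The supersolution inequality is obtained symmetrically by the same construction with a super-corrector.

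\textbf{Singular case, conclusion, and main obstacle.} When $p\in\Sing(M,\G)$ no admissible $C^1$ corrector exists, and the above construction must be regularized as in \cite{coville_singular_2013-1,caillerie_large_2017-1}: I would perturb $(p,H)$ to $(p_\delta,H_\delta)$ lying outside the singular set, build smooth correctors $Q_\delta$ that may concentrate as $\delta\to 0$ onto an invariant set of $\phi$ (a zero or a periodic orbit, by \eqref{Poincare}), and pass to the limit in the subsolution inequality using the monotonicity and lower semi-continuity of $\mathcal{H}$ that can be read off from \eqref{hamiltonian}--\eqref{defQ}. Once the sub/super-solution properties are established, well-prepared initial data yields $\overline{\vp}(0,\cdot)=\underline{\vp}(0,\cdot)=\vp_0$, and the standard comparison principle for \eqref{HJ} gives $\overline{\vp}\leq\underline{\vp}$, hence equality and local uniform convergence to the unique viscosity solution $\vp^0$. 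The main obstacle, relative to the force-free case of \cite{caillerie_large_2017-1}, is two-fold: one loses the global Lipschitz bounds that were previously obtained from $\di\G\equiv 0$, so the sub/super-solution argument must rely only on local estimates; and one must carry the perturbed test function method across $\Sing(M,\G)$, where the correctors degenerate into measures supported on the $\omega$-limit sets produced by \eqref{Poincare}--\eqref{mixing}. Verifying that the explicit formula \eqref{defQ} delivers admissible regular approximants for every $p\in\R^d$ is what I expect to be the technical core of the proof.
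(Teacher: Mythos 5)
Your overall scaffolding is the same as the paper's: a priori $L^\infty$ bounds via characteristics, half-relaxed limits of Barles--Perthame, independence from $v$, sub/super-solution property via perturbed test functions built from $Q_{p,H}$, and a comparison principle. But there are two places where your route diverges from the paper's in ways that matter.

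\textbf{Independence from $v$.} You propose to integrate along $\phi^v_s$ and to invoke the Poincar\'e--Bendixson condition \eqref{Poincare} and the mixing property \eqref{mixing} to ``propagate'' values to the representative $w(v)$, and you flag this as ``exactly where \eqref{mixing} is essential.'' This is not how the paper argues, and your sketch is shaky: at a local extremum in $v$ of a smooth test function $\psi(v)$, $\nabla_v\psi$ vanishes, so the ``dominating term $\G\cdot\nabla_v\psi$'' has no useful sign. The paper's Lemma on $v$-independence is a short Jensen-inequality argument: at a maximizer $(t^\eps,x^\eps,v^\eps)$ of $\varphi^\eps-\psi$ one shows that $\eps\int_{V'}\tM' e^{(\varphi^\eps(v^\eps)-\varphi^\eps(v'))/\eps}\,d\nu(v')$ stays bounded, applies Jensen, and then Fatou, to obtain $\varphi^*(t^0,x^0,v^0)\le\inf_V\varphi^*(t^0,x^0,\cdot)$. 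Neither \eqref{Poincare} nor \eqref{mixing} is used there; those assumptions enter only through the construction and the singular-set analysis of the Hamiltonian.

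\textbf{The singular set in the subsolution step, and the asymmetry with the supersolution step.} For the supersolution property the paper needs no singular-case treatment at all: one works with $\eta_H=-\log Q_{p^0,H}$ for \emph{any} $H>\mathcal{H}(p^0)$, and by the very definition \eqref{hamiltonian} this $Q_{p^0,H}$ is bounded and $\int_V\tM' Q'_{p^0,H}\,d\nu'\le 1$ whether or not $p^0\in\Sing$; letting $H\searrow\mathcal H(p^0)$ at the end closes the argument. Your description treats sub and super ``symmetrically,'' which obscures this genuinely one-sided structure. For the subsolution property when $p^0\in\Sing$ you propose a $\delta$-regularization $(p_\delta,H_\delta)\to(p^0,\mathcal H(p^0))$ and a double limit $\eps\to 0,\ \delta\to 0$; this introduces a uniformity issue that the paper simply avoids. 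The paper instead uses the ``singular part'' of the measure solution directly: by \Cref{utile} there is a $v_0$ (a zero of $\Gamma$, or a point on a periodic orbit) with $Q_{p^0,\mathcal H(p^0)}(v_0)=+\infty$, and by \Cref{utile2} the representative $w(v_0)$ supplied by \eqref{mixing} satisfies the exact identity $\tfrac{M(w)}{\tM(w)}+\mathcal H(p^0)-w\cdot p^0=0$. Testing $\varphi^\eps(\cdot,\cdot,v_0)-\psi$ (if $\Gamma(v_0)=0$), or averaging the kinetic inequality along the periodic orbit so that the $\tfrac{\Gamma}{\eps}\cdot\nabla_v\varphi^\eps$ term drops out in long time, then gives $\partial_t\psi+\mathcal H(p^0)\le 0$ directly at $(t^0,x^0)$. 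This is cleaner, avoids the double limit, and is where \eqref{Poincare} and \eqref{mixing} are genuinely load-bearing. (Minor note: at a maximum of $\varphi^*-\psi$ the subsolution inequality reads $\mathcal H(p)\le H$, not $H\le\mathcal H(p)$ as written.)
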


\begin{rem}
The sufficient conditions on $\mathcal{H}$ that guarantee the uniqueness of the solution of the Hamilton-Jacobi Equation (\ref{HJ}) will be proven in \Cref{lipschitz}.
\end{rem}

The rest of the paper is organized as follows: in Section 2, we describe how we obtain the hamiltonian and prove some results that we will use later on. In particular, we solve the spectral problem \eqref{probspec}. Section 3 is devoted to the proof of \Cref{thm}.

\newpage

\section{Identification of the hamiltonian}\label{identification}

\subsection{Positivity and boundedness of $\tM$}

As a first step, we will prove that $0<\tM<+\infty$.

\begin{lemma}\label{borneinf}
Let us assume that (\ref{infM}) and (\ref{divgamma}) hold and let $\tM$ satisfy (\ref{tm}). Then, $\underset{v\in V}{\mathrm{min}}\,\tM(v)>0$.
\end{lemma}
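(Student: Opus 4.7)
The plan is to reduce the stationary equation (\ref{tm}) to a scalar linear ODE along the flow $\phi^v_s$ of $-\Gamma$ introduced in (\ref{flowgamma}) and then read off a positive integral representation formula for $\tM$. Using $\int_V \tM(v')d\nu(v')=1$, the first line of (\ref{tm}) can be expanded as
\begin{equation*}
\Gamma(v)\cdot\na_v\tM(v)+(1+\di\Gamma(v))\tM(v)=M(v).
\end{equation*}
Differentiating $s\mapsto\tM(\phi^v_s)$ yields $\frac{d}{ds}\tM(\phi^v_s)=-(\Gamma\cdot\na_v\tM)(\phi^v_s)$, so with $u(s):=\tM(\phi^v_s)$, $a(s):=1+\di\Gamma(\phi^v_s)$ and $b(s):=M(\phi^v_s)$ one gets $u'(s)=a(s)u(s)-b(s)$.

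Multiplying by the integrating factor $e^{-A(s)}$ with $A(s):=\int_0^s a(\tau)d\tau$ and integrating over $[0,t]$ gives the Duhamel formula
\begin{equation*}
\tM(v)=e^{-A(t)}\tM(\phi^v_t)+\int_0^t e^{-A(s)}M(\phi^v_s)\,ds.
\end{equation*}
Assumption (\ref{divgamma}) forces $A(t)\geq \alpha t\to+\infty$, while continuity of $\tM$ on the compact manifold $V$ keeps $\tM(\phi^v_t)$ uniformly bounded, so the boundary term vanishes as $t\to+\infty$ and one obtains
\begin{equation*}
\tM(v)=\int_0^{+\infty}e^{-A(s)}M(\phi^v_s)\,ds.
\end{equation*}
Since $\Gamma$ is Lipschitz on compact $V$, there exists $C>0$ with $a(s)\leq C$ a.e., hence $A(s)\leq Cs$; combined with (\ref{infM}) this gives the uniform lower bound
\begin{equation*}
\tM(v)\geq\bigl(\min_V M\bigr)\int_0^{+\infty}e^{-Cs}\,ds=\frac{\min_V M}{C}>0,
\end{equation*}
which is the conclusion.

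The only delicate step is the passage to the limit $t\to+\infty$ in the Duhamel identity: one needs $\tM$ to be bounded and $\phi^v_s$ to remain in $V$ for all $s\geq 0$. The latter is guaranteed by the no-flux condition $\Gamma\,d\overrightarrow{S}=0$ on $\partial V$ imposed in the introduction, which forces the orbits of $-\Gamma$ starting inside $V$ to stay there; the former follows from the a priori continuity of $\tM$ on the compact manifold $V$. Neither the Poincar\'e-Bendixson condition (\ref{Poincare}) nor the mixing property (\ref{mixing}) is needed at this stage; they will enter only later when solving the spectral problem (\ref{probspec}).
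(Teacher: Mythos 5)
Your argument is correct, but it follows a genuinely different route from the paper. The paper's proof is a local extremum argument: it evaluates the expanded stationary equation $\Gamma\cdot\nabla_v\tM+(1+\di\Gamma)\tM=M$ at the point $v_{\mathrm{min}}$ where $\tM$ attains its minimum, uses that $\Gamma\cdot\nabla_v\tM(v_{\mathrm{min}})=0$ there (trivially for an interior critical point, and via the tangency condition $\Gamma\cdot d\overrightarrow{S}=0$ when $v_{\mathrm{min}}\in\partial V$), and reads off $\tM(v_{\mathrm{min}})=M(v_{\mathrm{min}})/(1+\di\Gamma(v_{\mathrm{min}}))>0$ directly from (\ref{infM}) and (\ref{divgamma}). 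Your proof is instead a global, constructive one: integrating along the flow $\phi^v_s$ of $-\Gamma$ and applying Duhamel yields the explicit representation $\tM(v)=\int_0^{+\infty}e^{-A(s)}M(\phi^v_s)\,ds$, from which positivity and a uniform lower bound follow immediately. The two approaches give essentially the same quantitative bound $\min\tM\geq\min_V M/(1+\Vert\di\Gamma\Vert_\infty)$. The paper's version is shorter and purely pointwise; yours is heavier (it needs the flow to preserve $V$ for all positive times and the boundary term to vanish, both of which you correctly justify) but has the advantage of producing a closed-form formula for $\tM$ that mirrors the integral-along-the-flow construction of $Q_{p,H}$ used throughout the rest of the paper. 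Both rely on the same implicit regularity of $\tM$ (continuity, so that the boundedness you invoke for the vanishing of $e^{-A(t)}\tM(\phi^v_t)$ is available before Lemma~\ref{Lemmma} is proved), which is consistent with the paper's own treatment.
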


\begin{proof}
Let $v_{\mathrm{min}}$ bet the point where $\tM$ reaches its minimum. Suppose $v_{\mathrm{min}}$ is in the interior of $V$. Then, $\na_v \tM(v_{\mathrm{min}})=0$, thus
\begin{equation*}
\tM(v_{\mathrm{min}})\di \Gamma (v_{\mathrm{min}})=M(v_{\mathrm{min}})-\tM(v_{\mathrm{min}}),
\end{equation*}
which implies
\begin{equation*}
\tM(v_{\mathrm{min}})=\frac{M(v_{\mathrm{min}})}{1+\di \Gamma(v_{\mathrm{min}})}\geq \frac{\mathrm{min}\,M}{1+\di \Gamma(v_{\mathrm{min}})}>0.
\end{equation*}
If $v_{\mathrm{min}}\in \partial V$, we can conclude likewise. Indeed, if $\Gamma(v_{\mathrm{min}})=0$, then the result is trivial. If $v_{\mathrm{min}}\in \partial V$ and $\Gamma(v_{\mathrm{min}})\neq 0$, since $\Gamma(v)\cdot d\overrightarrow{S}(v)=0$ for all $v\in \partial V$, there exists $v_0\in V$, $v_1 \in V$ and $\delta>0$ such that
\begin{equation*}
\begin{cases}
\phi^{v_{\mathrm{min}}}_s\in V,& \forall s\in [-\delta,\delta],\\
\phi^{v_{\mathrm{min}}}_{-\delta} = v_0,\\
\phi^{v_{\mathrm{min}}}_{\delta}=v_1.
\end{cases}
\end{equation*}
The extremal property of $(v_{\mathrm{min}})$ now implies that 
\begin{equation*}
\Gamma(v_{\mathrm{min}})\cdot \nabla_v \tM(v_{\mathrm{min}})=-\left. \frac{d}{ds}\tM(\phi^{v_{\mathrm{min}}}_s)\right\vert_{s=0}=0.
\end{equation*}

\end{proof}

\begin{lemma}\label{Lemmma}
Let us assume that (\ref{divgamma}) holds. Then,
\begin{equation*}
\underset{v\in V}{\mathrm{sup}}\,\tM(v)<+\infty.
\end{equation*}
\end{lemma}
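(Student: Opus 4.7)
The proof mirrors that of \Cref{borneinf}, applied to a maximizer instead of a minimizer. The plan is to rewrite \eqref{tm} using the normalization $\int_V \tM d\nu = 1$, so that
\begin{equation*}
\Gamma(v)\cdot \na_v \tM(v) + \bigl(1 + \di\,\Gamma(v)\bigr)\tM(v) = M(v), \qquad v \in V.
\end{equation*}
Since $V$ is compact and $\tM$ is continuous, it attains its maximum at some point $v_{\max} \in V$.

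If $v_{\max}$ lies in the interior of $V$, then $\na_v \tM(v_{\max}) = 0$, and the rewritten equation combined with \eqref{divgamma} gives
\begin{equation*}
\tM(v_{\max}) = \frac{M(v_{\max})}{1 + \di\,\Gamma(v_{\max})} \leq \frac{\max_V M}{\alpha},
\end{equation*}
which is finite since $M \in C^0(V)$ and $V$ is compact.

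If $v_{\max} \in \partial V$, I treat two subcases exactly as in the previous lemma. If $\Gamma(v_{\max}) = 0$, the rewritten equation immediately yields $\tM(v_{\max}) = M(v_{\max})/(1 + \di\,\Gamma(v_{\max})) \leq \max M / \alpha$. Otherwise, the tangency condition $\Gamma \cdot d\overrightarrow{S} = 0$ on $\partial V$ provides $\delta > 0$ such that $\phi^{v_{\max}}_s \in V$ for all $s \in [-\delta,\delta]$, and the extremality of $\tM$ at $v_{\max}$ along this orbit gives
\begin{equation*}
\Gamma(v_{\max}) \cdot \na_v \tM(v_{\max}) = -\left.\tfrac{d}{ds} \tM(\phi^{v_{\max}}_s)\right\vert_{s=0} = 0,
\end{equation*}
and the same bound follows.

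There is essentially no serious obstacle here: the argument is a direct adaptation of the minimum principle used in \Cref{borneinf}, with the only delicate point being the boundary case, which is already handled by the Poincar\'e-Bendixson-type flow structure invoked previously. The continuity of $\tM$ and the compactness of $V$ are what make the supremum an attained maximum, and the assumption \eqref{divgamma} provides the positive denominator needed for the final bound.
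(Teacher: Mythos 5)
Your proof is correct and takes essentially the same approach as the paper, which simply states that the ideas of \Cref{borneinf} apply verbatim to the maximizer and then records the bound $\tM(v_{\max}) = M(v_{\max})/(1+\di\Gamma(v_{\max})) \leq \Vert M\Vert_\infty/\alpha$. You have filled in the details (interior first-order condition, boundary tangency via $\Gamma\cdot d\overrightarrow{S}=0$) exactly as the paper's proof of \Cref{borneinf} does for the minimizer.
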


\begin{proof}
We use the same ideas as in \Cref{borneinf} to prove that
\begin{equation*}
\tM(v_{\mathrm{max}})=\frac{M(v_{\mathrm{max}})}{1+\di\G(v_{\mathrm{max}})}\leq \frac{\left\Vert M \right\Vert_{\infty}}{\alpha},
\end{equation*}
thanks to (\ref{divgamma}).
\end{proof}

\subsection{The spectral problem}\label{subpart1}

Here, we discuss the resolution of the spectral problem, that is: for all $p\in\R^d$, find $H$ and a function $Q>0$ such that
\begin{equation*}
HQ(v)=\left(v\cdot p - \frac{M(v)}{\tM(v)}\right)Q(v)-\G(v)\cdot\na_v Q(v)+\frac{M(v)}{\tM(v)}\int_V \tM'Q'd\nu(v')
\end{equation*}
holds, for all $v\in V$.

To find such a solution, we use the method of characteristics: let us define $\phi$ as the flow of $-\G$ (see (\ref{flow})). Then, we have\\
$\displaystyle{\frac{d}{ds}\left(Q(\phi^v_s)\mathrm{exp}\left( -\int_0^s \left( \frac{M(\phi^v_{\sigma})}{\tM(\phi^v_\sigma)}+H-\phi^v_\sigma \cdot p \right) d\sigma\right)\right)}$
\begin{equation*}
=-\mathrm{exp}\left( -\int_0^s \left( \frac{M(\phi^v_\sigma)}{\tM(\phi^v_\sigma)}+H-\phi^v_\sigma \cdot p \right) d\sigma\right)\left[\left( \frac{M(\phi^v_s)}{\tM(\phi^v_s)}+H-\phi^v_s \cdot p \right)Q(\phi^v_s)+\Gamma(\phi^v_s)\cdot \na_v Q(\phi^v_s)\right]
\end{equation*}
\begin{flushright}
$\displaystyle{=-\mathrm{exp}\left( -\int_0^s \left( \frac{M(\phi^v_\sigma)}{\tM(\phi^v_\sigma)}+H-\phi^v_\sigma \cdot p \right) d\sigma\right)}\frac{M(\phi^v_s)}{\tM(\phi^v_s)}\int_V \tM'Q'd\nu(v').$
\par\end{flushright}
Suppose that 
\begin{equation*}
\displaystyle{\underset{s\to +\infty}{\mathrm{lim}}\mathrm{exp}\left( -\int_0^s \left( \frac{M(\phi^v_\sigma)}{\tM(\phi^v_\sigma)}+H-\phi^v_\sigma \cdot p \right) d\sigma\right)=0}.
\end{equation*}
Then, integrating between 0 and $+\infty$ gives
\begin{equation}\label{vecteurpropre}
Q(v)=\int_0^{+\infty}\frac{M(\phi^v_t)}{\tM(\phi^v_t)}\mathrm{exp}\left( -\int_0^t \left( \frac{M(\phi^v_s)}{\tM(\phi^v_s)}+H-\phi^v_s \cdot p \right) ds\right)dt\int_V \tM'Q'd\nu(v').
\end{equation}
Integrating \Cref{vecteurpropre} against $\tM$ finally gives
\begin{equation*}
1=\int_V\tM(v)\int_0^{+\infty}\frac{M(\phi^v_t)}{\tM(\phi^v_t)}\mathrm{exp}\left( -\int_0^t \left( \frac{M(\phi^v_s)}{\tM(\phi^v_s)}+H-\phi^v_s \cdot p \right) ds\right)dtd\nu(v).
\end{equation*}
In other terms, solving the spectral problem is equivalent to finding $H\in\R$ such that $\int_V \tM'Q'_{p,H}d\nu(v')=1$ holds, where
\begin{equation*}
Q_{p,H}(v)=\int_0^{+\infty}\frac{M(\phi^v_t)}{\tM(\phi^v_t)}\mathrm{exp}\left( -\int_0^t \left( \frac{M(\phi^v_s)}{\tM(\phi^v_s)}+H-\phi^v_s \cdot p \right) ds\right)dt.
\end{equation*}
It is straight-forward to check that $H\mapsto \int_V \tM'Q'_{p,H}d\nu(v')$ is monotically decreasing and continuous. As a result, if there exists $H$ such that $\int_V \tM'Q'_{p,H}=1$, then such $H$ is unique. Let us recall the definition of our hamiltonian:
\begin{equation*}
\mathcal{H}(p):=\mathrm{inf}\left\{ H\in\R,\quad \int\tM(v')Q_{p,H}(v')d\nu(v') \leq 1 \right\}.
\end{equation*}

\begin{proposition}\label{resolution} Resolution of the spectral problem in $C^1(V)$
\begin{enumerate}
\item[(i)] If $p\in \mathrm{Sing}(M,\Gamma)^c$, then $\int_V \tM'Q'_{p,\mathcal{H}(p)}d\nu(v')=1$, \em i.e. \em the couple $(Q_{p,\mathcal{H}(p)},\mathcal{H}(p))$ is a solution to the spectral problem (\ref{probspec}).
\item[(ii)] If $p\in \mathrm{Sing}(M,\Gamma)$, then $\underset{v\in V}{\mathrm{sup}}\,Q_{p,\mathcal{H}(p)}=+\infty$, \em i.e. \em there is no solution of the spectral problem (\ref{probspec}) in $C^1(V)$.

\end{enumerate}
\end{proposition}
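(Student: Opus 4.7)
The plan is to reduce both parts to the analysis of the scalar function $F(H) := \int_V \tM(v') Q_{p,H}(v') d\nu(v')$, whose monotonicity and continuity where finite have already been recorded in \Cref{subpart1}. First, I would note (by dominated convergence) that $F(H) \to 0$ as $H \to +\infty$, so that $\mathcal{H}(p)$ is well defined and finite. Part (i) then follows from the intermediate value theorem: if $p \notin \Sing$, there exists $H_0$ with $1 < F(H_0) < +\infty$, and the continuous decreasing restriction of $F$ to $[H_0,+\infty)$ crosses the value $1$ exactly once, say at $H^*$, which must equal $\mathcal{H}(p)$ by monotonicity and (\ref{hamiltonian}). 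Differentiability of $v \mapsto Q_{p,\mathcal{H}(p)}(v)$ along the flow of $-\G$ follows by differentiating (\ref{defQ}) under the integral sign, and one recovers the spectral equation (\ref{probspec}) by reversing the characteristic computation carried out in this section.

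For (ii), the key is to identify $\mathcal{H}(p)$ with the critical threshold
\begin{equation*}
\Psi^*(p) := \sup_{v\in V}\Psi(v), \qquad \Psi(v) := w(v)\cdot p - \frac{M(w(v))}{\tM(w(v))},
\end{equation*}
where $w(v)$ is the representative provided by (\ref{mixing}). Indeed, that hypothesis yields
\begin{equation*}
\lim_{t\to\infty}\frac{1}{t}\int_0^t\left(\frac{M(\phi^v_s)}{\tM(\phi^v_s)}+H-\phi^v_s\cdot p\right)ds = H-\Psi(v),
\end{equation*}
so the integral defining $Q_{p,H}(v)$ converges if and only if $H > \Psi(v)$. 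For $H>\Psi^*$, uniform control of the Cesaro limit yields a pointwise bound on $Q_{p,H}$, hence $F(H)<+\infty$; for $H<\Psi^*$, the set $\{v:\Psi(v)>H\}$ has positive $\nu$-measure and on it $Q_{p,H}\equiv+\infty$, so $F(H)=+\infty$. The singular hypothesis (\ref{sing}) then forces $F(H)\leq 1$ on $(\Psi^*,+\infty)$, so $\mathcal{H}(p)=\Psi^*$.

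It remains to show $\sup_V Q_{p,\mathcal{H}(p)} = +\infty$. I would take a maximizing sequence $v_n$ with $\Psi(v_n)\to\Psi^*$ and set $\epsilon_n:=\Psi^*-\Psi(v_n)\downarrow 0$. The key quantitative estimate is
\begin{equation*}
\int_0^t\left(\frac{M(\phi^{v_n}_s)}{\tM(\phi^{v_n}_s)}+\Psi^*-\phi^{v_n}_s\cdot p\right)ds = \epsilon_n t + \xi_n(t),
\end{equation*}
with $\xi_n(t)$ bounded uniformly in $t$. This is where the Poincar\'e--Bendixson hypothesis (\ref{Poincare}) is essential: the orbit of $v_n$ is asymptotic either to a zero of $-\G$ (with exponential convergence coming from Lipschitz regularity of $\G$) or to a periodic orbit, and on each such limit set the integrand has zero time average, so the fluctuation around the linear part $\epsilon_n t$ stays $O(1)$ uniformly in $n$. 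Plugging this back into (\ref{defQ}) yields $Q_{p,\mathcal{H}(p)}(v_n)\to +\infty$.

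The main obstacle I foresee is the uniform-in-$n$ bound on the fluctuation $\xi_n(t)$: Cesaro convergence alone only gives $\xi_n(t)=o(t)$, and one genuinely needs to exploit the rigid structure imposed by (\ref{Poincare}) together with the hyperbolicity produced by (\ref{divgamma}) to upgrade this to a uniform $O(1)$ bound. A related technical point is verifying that $\{v:\Psi(v)>H\}$ carries positive $\nu$-measure for $H<\Psi^*$; since $v\mapsto w(v)$ (and hence $\Psi$) may be discontinuous across basins of attraction, this should be handled by arguing directly from the flow structure of (\ref{Poincare}) and (\ref{mixing}) rather than by topological continuity.
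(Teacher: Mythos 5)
Part (i) of your proof is essentially the paper's own argument: exploit the monotonicity and continuity of $H\mapsto F(H):=\int_V \tM' Q'_{p,H}\,d\nu'$ on the region where it is finite, deduce $F(\mathcal{H}(p))=1$, and reverse the characteristic computation to recover the spectral equation. Nothing to add there.

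Part (ii) is a genuinely different route from the paper, and it has gaps you correctly sense but underestimate. Your strategy is to identify $\mathcal{H}(p)$ with a ``Cesaro threshold'' $\Psi^*(p)=\sup_v\bigl(w(v)\cdot p - M(w(v))/\tM(w(v))\bigr)$ and then exhibit a maximizing sequence $v_n$ with $Q_{p,\mathcal{H}(p)}(v_n)\to+\infty$ via a uniform $O(1)$ bound on the fluctuation $\xi_n(t)$. The uniform bound is \emph{false} in the stated generality. Lipschitz regularity of $\G$ does \emph{not} give exponential convergence to a zero of $-\G$: it only bounds $|\dot\phi|\leq L|\phi-v_0|$, so convergence is \emph{at most} exponential but can be arbitrarily slow. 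For instance $\G(v)=v^3$ near $v_0=0$ satisfies (\ref{divgamma}) (since $1+3v^2\geq 1$) yet $\phi^v_t\to 0$ only like $t^{-1/2}$, so $\xi(t)=\int_0^t\bigl(g(\phi^v_s)-g(v_0)\bigr)ds$ can grow like $\sqrt t$. This destroys both halves of your argument: the claim that $F(H)<+\infty$ for every $H>\Psi^*$ (needed to prove $\mathcal{H}(p)=\Psi^*$), and the claim that $Q_{p,\mathcal{H}(p)}(v_n)\to+\infty$. Your second flagged issue is also real: $\{v:\Psi(v)>H\}$ can have $\nu$-measure zero when $\Psi^*$ is attained only along a single attracting orbit, so $F(H)=+\infty$ for $H<\Psi^*$ does not follow, and with it the identification $\mathcal{H}(p)=\Psi^*$ collapses. (Note the paper \emph{does} prove a result in this direction, namely \Cref{utile2}, but only in one direction and for the specific limit orbit associated with a point where $Q_{p,\mathcal{H}(p)}=+\infty$.)

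The paper instead proceeds by a short comparison argument that avoids any orbit-structure input. Assume toward a contradiction that $Q_{p,\mathcal{H}(p)}$ is bounded. From $p\in\Sing$ and monotonicity one gets $F(\mathcal{H}(p)-\delta)=+\infty$ for every $\delta>0$, hence $\sup_v Q_{p,\mathcal{H}(p)-\delta}=+\infty$. Setting $\mathcal{Z}_\delta:=Q_{p,\mathcal{H}(p)-\delta}-Q_{p,\mathcal{H}(p)}$, subtracting the two characteristic ODEs, and integrating along the flow gives
\begin{equation*}
\mathcal{Z}_\delta(v)\leq \delta\,\|Q_{p,\mathcal{H}(p)}\|_\infty\,\frac{\max\tM}{\min M}\,Q_{p,\mathcal{H}(p)-\delta}(v),
\end{equation*}
hence $Q_{p,\mathcal{H}(p)}(v)\geq\bigl(1-\delta\,\|Q_{p,\mathcal{H}(p)}\|_\infty\frac{\max\tM}{\min M}\bigr)Q_{p,\mathcal{H}(p)-\delta}(v)$, which for small $\delta$ contradicts the unboundedness of $Q_{p,\mathcal{H}(p)-\delta}$. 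This bootstrap is insensitive to whether orbits converge exponentially or polynomially and does not require locating where $\Psi$ attains its supremum, which is precisely why the paper did not need to confront the two obstacles you identified. Your instinct that they were the crux was correct; I would recommend abandoning the $\Psi^*$ route and adopting this comparison argument instead.
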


\begin{proof}
(i) Let $p\in \mathrm{Sing}(M,\Gamma)^c$. By definition, there exists $H_0\in\R$ such that $\int_V \tM'Q'_{p,H_0}d\nu'>1$. By continuity and monotonicity of $H\mapsto\int_V \tM'Q'_{p,H}d\nu'$, this means that, for all $H_0< H <\mathcal{H}(p)$,
\begin{equation*}
+\infty>\int_V \tM'Q'_{p,H_0}d\nu'>\int_V \tM'Q'_{p,H}d\nu'>1,
\end{equation*}
the last inequality being true by definition since $H<\mathcal{H}(p)$ (recall \eqref{hamiltonian}). Finally,
\begin{equation*}
1\geq \int_V \tM'Q'_{p,\mathcal{H}(p)}d\nu'=\underset{H\searrow \mathcal{H}(p)}{\mathrm{lim}}\int_V \tM'Q'_{p,H}d\nu'\geq 1,
\end{equation*}
which proves (i).\\
(ii) Suppose that $p\in \mathrm{Sing}(M,\Gamma)$ and that $Q_{p,\mathcal{H}(p)}$ is bounded. We let $\delta>0$. Then, $\mathrm{sup}_{v\in V}Q_{p,\mathcal{H}(p)-\delta}=+\infty$. Indeed, in the opposite case $Q_{p,\mathcal{H}(p)-\delta}$ is bounded and hence, integrable on $V$ which is not possible since $p\in \mathrm{Sing}(M,\Gamma)$. Where defined, the function $\mathcal{Z}_\delta:=Q_{p,\mathcal{H}(p)-\delta}-Q_{p,\mathcal{H}(p)}$ satisfies
\begin{equation*}
\left(\frac{M(v)}{\tM(v)}+\mathcal{H}(p)-\delta-v\cdot p\right)\mathcal{Z}_{\delta}+\Gamma\cdot \nabla_v \mathcal{Z}_{\delta}=\delta Q_{p,\mathcal{H}(p)}\leq \delta \left\Vert Q_{p,\mathcal{H}(p)} \right\Vert_{\infty}.
\end{equation*}
By the method of characteristics, this implies that
\begin{eqnarray*}
\mathcal{Z}_{\delta}(v)&\leq &\delta \left\Vert Q_{p,\mathcal{H}(p)} \right\Vert_{\infty}\int_0^{+\infty}\mathrm{exp}\left(-\int_0^t \left(\frac{M(\phi^v_s)}{\tM(\phi^v_s)}+\mathcal{H}(p)-\delta - \phi^v_s\cdot p\right)ds\right)dt\\
&=& \delta \left\Vert Q_{p,\mathcal{H}(p)} \right\Vert_{\infty} \frac{\tM(v)}{M(v)}Q_{p,\mathcal{H}(p)}(v)\\
&\leq& \delta\left\Vert Q_{p,\mathcal{H}(p)} \right\Vert_{\infty}\frac{\mathrm{max}\,\tM}{\mathrm{min}\,M}Q_{p,\mathcal{H}(p)-\delta}(v),
\end{eqnarray*}
for all $v$ where $Q_{p,\mathcal{H}(p)-\delta}(v)<+\infty$. Hence,
\begin{equation*}
Q_{p,\mathcal{H}(p)}(v)\geq \left(1-\delta\left\Vert Q_{p,\mathcal{H}(p)} \right\Vert_{\infty}\frac{\mathrm{max}\,\tM}{\mathrm{min}\,M}\right) Q_{p,\mathcal{H}(p)-\delta}(v).
\end{equation*}
Since $Q_{\mathcal{H}(p)}$ is bounded and $\mathrm{sup}_{v\in V}Q_{p,\mathcal{H}(p)-\delta}=+\infty$, this is absurd.

\end{proof}

We now discuss the resolution of the spectral problem in the set of positive measures.

\begin{lemma}\label{utile}
Let $p\in \mathrm{Sing}(M,\Gamma)^c$ and $v\in V\setminus\mathcal{D}(Q_{p,\mathcal{H}(p)})$, \em i.e. \em $Q_{p,\mathcal{H}(p)}(v)=+\infty$. Thanks to the Poincar\'e-Bendixson condition (\ref{Poincare}), either $\phi^v_t$ converges to some $v_0\in V$ or the limit set of $(\phi^v_t)_t$ is the periodic orbit of some $v_0\in V$. Either way, $Q_{p,\mathcal{H}(p)}(v_0)=+\infty$.
\end{lemma}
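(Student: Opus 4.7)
The plan is to reduce the finiteness of $Q_{p,\mathcal{H}(p)}$ at a point to the sign of a Cesaro average, and then to transfer this condition from $v$ to $v_0$ via the mixing hypothesis. Set $H:=\mathcal{H}(p)$, $R(w):=M(w)/\tM(w)$ and $F(w):=R(w)+H-w\cdot p$, so that
$$Q_{p,H}(v)=\int_0^{+\infty}R(\phi^v_t)\,e^{-A_v(t)}\,dt,\qquad A_v(t):=\int_0^t F(\phi^v_s)\,ds.$$
By \eqref{infM}, \Cref{borneinf} and \Cref{Lemmma}, the function $R$ is continuous on $V$ and bounded between two positive constants. Hence $Q_{p,H}(w)$ is infinite if and only if $\int_0^{+\infty}e^{-A_w(t)}\,dt$ is infinite, and the question becomes one about the growth of $A_v$ and $A_{v_0}$.

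Applying the mixing assumption \eqref{mixing} to the continuous function $F$ gives $A_v(t)/t\to F(w(v))$. If we had $F(w(v))>0$, then $e^{-A_v(t)}$ would decay exponentially and $Q_{p,H}(v)$ would be finite, contradicting the hypothesis; hence $F(w(v))\le 0$. Next I would show that $F(w(v))=F(w(v_0))$ by comparing the Cesaro averages of an arbitrary continuous $G$ along $\phi^v$ and $\phi^{v_0}$. In Case 1 the point $v_0$ is a zero of $\Gamma$, $\phi^{v_0}$ is constant, and $\phi^v_s\to v_0$, so Cesaro's theorem yields $G(w(v))=G(w(v_0))=G(v_0)$ for every continuous $G$, whence $w(v)=w(v_0)=v_0$. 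In Case 2 the orbit of $v$ accumulates on the $T$-periodic orbit of $v_0$, and uniform continuity of $G$ together with a Riemann-sum comparison on long time windows $[nT,(n+1)T]$ shows that both $t^{-1}\int_0^t G(\phi^v_s)\,ds$ and $t^{-1}\int_0^t G(\phi^{v_0}_s)\,ds$ tend to $T^{-1}\int_0^T G(\phi^{v_0}_s)\,ds$. Either way, $F(w(v_0))=F(w(v))\le 0$.

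It remains to deduce $Q_{p,H}(v_0)=+\infty$. In Case 1, $A_{v_0}(t)=tF(v_0)\le 0$, so $e^{-A_{v_0}(t)}\ge 1$ and the integral diverges at once. In Case 2, periodicity of $\phi^{v_0}_\cdot$ yields $A_{v_0}(t+T)-A_{v_0}(t)=TF(w(v_0))\le 0$, which bounds $A_{v_0}$ above by $\max_{[0,T]}A_{v_0}$ on $[0,+\infty)$; combined with the uniform positive lower bound on $R$, this forces $Q_{p,H}(v_0)=+\infty$. I expect the main difficulty to lie in the periodic case of the second paragraph: rigorously identifying the long-time average along an orbit that only approaches the periodic orbit with the time average around the orbit itself, which requires exchanging the Poincar\'e--Bendixson convergence and the Cesaro limit via uniform continuity of $F$ on $V$.
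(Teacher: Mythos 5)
Your proof is correct, and it is in fact more careful than the paper's own argument, which is essentially a two-line assertion. Both you and the paper start from the same observation that the Ces\`aro averages of $F(w):=\frac{M(w)}{\tM(w)}+\mathcal{H}(p)-w\cdot p$ along $\phi^v$ and along $\phi^{v_0}$ coincide. But the paper then claims that this equality \emph{implies} the asymptotic equivalence
$\exp(-A_v(t))\sim\exp(-A_{v_0}(t))$ as $t\to+\infty$,
which is not a valid implication: $A_v(t)/t\to L$ and $A_{v_0}(t)/t\to L$ only give $A_v(t)-A_{v_0}(t)=o(t)$, and $e^{o(t)}$ need not tend to $1$ (take $A_v(t)=t+\sqrt{t}$, $A_{v_0}(t)=t$). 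Your argument sidesteps this entirely: you extract from $Q_{p,\mathcal{H}(p)}(v)=+\infty$ and the Ces\`aro limit that $F(w(v))\le 0$, transfer this sign to $F(w(v_0))$ via the equality of averages, and then use the \emph{exact} structure of the orbit of $v_0$ (fixed point or $T$-periodic) to show that $A_{v_0}$ is bounded above, whence the integral $\int_0^\infty e^{-A_{v_0}(t)}\,dt$ diverges. This is the right way to close the argument, and the reduction $A_{v_0}(t+T)-A_{v_0}(t)=TF(w(v_0))\le 0$ in the periodic case is exactly what is needed and is not made explicit in the paper.

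The one place where you and the paper rely on the same unelaborated fact is the identification of the two Ces\`aro limits in the periodic case (your Case 2 of the second paragraph). The paper hides this inside the mixing assumption (\ref{mixing}); you sketch a Riemann-sum argument and flag it as the delicate step. Note that you can shorten this by simply invoking (\ref{mixing}) for both $v$ and $v_0$: it directly gives $w(v)$ and $w(v_0)$, and for the periodic orbit the Ces\`aro limit \emph{is} the period average, so $F(w(v_0))=\frac{1}{T}\int_0^T F(\phi^{v_0}_s)\,ds$ with no extra work; the only thing that genuinely needs Poincar\'e--Bendixson plus a winding argument is that the Ces\`aro limit along $\phi^v$ equals that same period average, i.e. $w(v)=w(v_0)$. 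Your instinct that this is where the real work lies is correct, and it is not made any more rigorous in the paper than in your proposal.
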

\begin{proof}
The result holds since
\begin{equation*}
\underset{t\to +\infty}{\mathrm{lim}}\,\frac{1}{t}\int_0^t \left(\frac{M(\phi^v_s)}{\tM(\phi^v_s)}+\mathcal{H}(p)-\phi^v_s \cdot p\right)ds=\underset{t\to +\infty}{\mathrm{lim}}\,\frac{1}{t}\int_0^t \left(\frac{M(\phi^{v_0}_s)}{\tM(\phi^{v_0}_s)}+\mathcal{H}(p)-\phi^{v_0}_s \cdot p\right)ds,
\end{equation*}
which implies that
\begin{equation*}
\mathrm{exp}\left(-\int_0^t \left(\frac{M(\phi^v_s)}{\tM(\phi^v_s)}+\mathcal{H}(p)-\phi^v_s \cdot p\right)ds\right)\underset{t\to+\infty}{\sim}\mathrm{exp}\left(-\int_0^t \left(\frac{M(\phi^{v_0}_s)}{\tM(\phi^{v_0}_s)}+\mathcal{H}(p)-\phi^{v_0}_s \cdot p\right)ds\right).
\end{equation*}
\end{proof}

\begin{lemma}\label{utile2}
Let now $v_0$ be defined as in \Cref{utile} and let $w(v_0)\in V$ be the vector defined after the mixing property (\ref{mixing}). Then, $\frac{M(w)}{\tM(w)}+\mathcal{H}(p)-w\cdot p=0$.
\end{lemma}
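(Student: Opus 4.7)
The plan is to introduce $L := \frac{M(w)}{\tM(w)} + \mathcal{H}(p) - w\cdot p$ and prove $L=0$ by ruling out $L>0$ and $L<0$ separately. The starting point is to apply the mixing hypothesis \eqref{mixing} to the function $F(v) := \frac{M(v)}{\tM(v)} + \mathcal{H}(p) - v\cdot p$, which is continuous on $V$ since $\tM$ is bounded away from zero by \Cref{borneinf}. Evaluated at the point $v_0$, this gives
\[
L \;=\; F(w) \;=\; \lim_{t\to+\infty}\frac{I(t)}{t}, \qquad I(t) \;:=\; \int_0^t F(\phi^{v_0}_s)\, ds,
\]
so that $L$ is precisely the Ces\`aro-asymptotic slope of the exponent appearing in the integral representation \eqref{defQ} of $Q_{p,\mathcal{H}(p)}(v_0)$.

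For the inequality $L\le 0$, I would argue by contradiction: if $L>0$, then $I(t)\ge (L/2)t$ for all $t$ large, so using the uniform upper bound on $M/\tM$ provided by \Cref{borneinf} and \Cref{Lemmma} the integrand in \eqref{defQ} is dominated by an exponentially decaying function of $t$, giving $Q_{p,\mathcal{H}(p)}(v_0)<+\infty$ and contradicting \Cref{utile}.

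For the inequality $L\ge 0$, again by contradiction, suppose $L<0$. Any $v\in V$ whose forward orbit under $-\Gamma$ has the same $\omega$-limit set as $v_0$ shares the value $w(v)=w$ (the $\omega$-limit set of such a $v$ is a fixed point or a periodic orbit and thus carries a unique asymptotic time-average) and hence the same Ces\`aro limit $I_v(t)/t\to L<0$. For each such $v$ the exponential factor $e^{-I_v(t)}$ blows up at rate $e^{|L|t/2}$, so the integrand in \eqref{defQ} forces $Q_{p,\mathcal{H}(p)}(v)=+\infty$. Since $p\in\Sing^c$, \Cref{resolution}(i) gives $\int_V \tM\, Q_{p,\mathcal{H}(p)}\, d\nu = 1$, so $Q_{p,\mathcal{H}(p)}$ must be $\nu$-almost everywhere finite; therefore the basin of the $\omega$-limit orbit of $v_0$ must be $\nu$-negligible, which is the desired contradiction provided the basin has positive $\nu$-measure.

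The delicate step is precisely this last one: proving that the basin has positive $\nu$-measure. The Poincar\'e-Bendixson condition \eqref{Poincare} restricts $\omega$-limits to fixed points or periodic orbits, but the basin could a priori be only the lower-dimensional stable manifold of a saddle, and such a set can fail to contribute to the integral against $\tM\,d\nu$. To close the argument I expect to need the divergence bound \eqref{divgamma}, which controls the Jacobian of $\phi$ and prevents excessive contraction, together with the fact that the specific point $v$ furnished by \Cref{utile} (which has $Q_{p,\mathcal{H}(p)}(v)=+\infty$) lies by construction in the basin under consideration. This regularity/volume statement on the flow $\phi$ is the step I anticipate will demand the most care.
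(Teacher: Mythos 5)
Your argument ruling out $L>0$ is exactly the paper's proof of this lemma: assume $\frac{M(w)}{\tM(w)}+\mathcal{H}(p)-w\cdot p=\delta>0$, note that the Ces\`aro slope of the exponent in \eqref{defQ} along the orbit of $v_0$ is then $\delta>0$, so the exponential eventually decays; since $M/\tM$ is bounded above and below by \Cref{borneinf} and \Cref{Lemmma}, this forces $Q_{p,\mathcal{H}(p)}(v_0)<+\infty$, contradicting \Cref{utile}. If anything your formulation via $I(t)\ge(L/2)t$ for large $t$ is cleaner than the asymptotic equivalence $\sim e^{-\delta t}$ written in the paper, since the Ces\`aro limit controls only the linear rate and not the sub-exponential multiplicative factor.

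The difficulty you flag for the converse direction $L\ge 0$ is genuine, and --- this is worth knowing --- it is present in the paper's proof too: the displayed argument only treats $\delta>0$, so it establishes $\frac{M(w)}{\tM(w)}+\mathcal{H}(p)-w\cdot p\le 0$ but not the equality asserted in the statement. As you observe, a contradiction from $\delta<0$ via the definition of $\mathcal{H}(p)$ would require that $\{Q_{p,H}=+\infty\}$ have positive $\nu$-mass for some $H>\mathcal{H}(p)$, i.e.\ that the basin of the $\omega$-limit set of $v_0$ not be $\nu$-negligible, and this can fail (e.g.\ a one-dimensional stable manifold of a saddle). Your hope that \eqref{divgamma} saves the day is not enough on its own: it gives only a lower bound on $\di\Gamma$ and so does not forbid the contracting directions from being few. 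So: your proof of the half that the paper actually proves follows the same route and is correct; the gap you identify in the other half is real and unresolved in the paper as well. It is worth noting that only the inequality $\frac{M(w)}{\tM(w)}\le w\cdot p^0-\mathcal{H}(p^0)$ is used downstream (in the sub-solution step of \Cref{viscosity}(ii)), so this soft spot does not affect \Cref{thm}; the equality is only invoked to verify \Cref{inthesenseofd}, which is auxiliary.
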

\begin{proof}
Let us assume that $\frac{M(w)}{\tM(w)}+\mathcal{H}(p)-v\cdot p=\delta>0$. Then,
\begin{equation*}
\mathrm{exp}\left(-\int_0^t \left(\frac{M(\phi^{v_0}_s)}{\tM(\phi^{v_0}_s)}+\mathcal{H}(p)-\phi^{v_0}_s \cdot p\right)ds\right)\underset{t\to +\infty}{\sim}e^{-\delta t},
\end{equation*}
hence $Q_{p,\mathcal{H}(p)}(v_0)=\int_{[0,+\infty)}\frac{M(\phi^{v_0}_t)}{\tM(\phi^{v_0}_t)}\mathrm{exp}\left(-\int_{[0,t]}\left(\frac{M(\phi^{v_0}_s)}{\tM(\phi^{v_0}_s)}+\mathcal{H}(p)-\phi^{v_0}_s \cdot p\right)ds\right)dt<+\infty$, which is absurd after \Cref{utile}.
\end{proof}

\begin{proposition}\label{inthesenseofd}
Resolution of the problem in the set of positive measures.

Let $p\in \mathrm{Sing}(M,\Gamma)$. Let $v_0$ be defined after \Cref{utile}.
\begin{enumerate}
\item[(i)]  If $v_0$ is such that $\Gamma(v_0)=0$ and $\frac{M(v_0)}{\tM(v_0)}+\mathcal{H}(p)-v_0\cdot p=0$, then the measure $\mu:=\delta_{v_0}$, where $\delta_{v_0}$ is the dirac mass at $v_0$ satisfies
\begin{equation}\label{distribution}
\left(\frac{M}{\tM}+\mathcal{H}(p)-v\cdot p\right)Q + \Gamma\cdot \nabla_v Q =0
\end{equation}
in the sense of distributions.
\item[(ii)] If $v_0 \in V$ belongs to a periodic orbit of period $T$ and $\phi^{v_0}_t\in V\setminus \mathcal{D}(Q_{p,\mathcal{H}(p)})$ for all $0\leq t \leq T$, then the uniform probability measure $\mu$ on the set $\left\{ \phi^{v_0}_t,\; 0\leq t \leq T \right\}$ satisfies (\ref{distribution}) in the sense of distributions.
\item[(iii)] Either way, the positive measure $\tilde{\mu}:=Q_{p,\mathcal{H}(p)}d\nu+\left(1-\int_V Q'_{p,\mathcal{H}(p)}d\nu'\right)\mu$ associated with the eigenvalue $\mathcal{H}(p)$ is a solution to the spectral problem (\ref{probspec}).
\end{enumerate}
\end{proposition}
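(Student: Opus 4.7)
The proof splits into the three cases stated.

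For \emph{(i)}, testing equation (\ref{distribution}) at $\delta_{v_0}$ in a weak sense (pairing the measure against the test function through the formal adjoint of the operator $\bigl(M/\tM + \mathcal{H}(p) - v \cdot p\bigr)\cdot + \Gamma \cdot \nabla_v$) reduces, for $\psi \in C^\infty(V)$, to
\[
\left(\frac{M(v_0)}{\tM(v_0)} + \mathcal{H}(p) - v_0 \cdot p\right)\psi(v_0) + \Gamma(v_0) \cdot \nabla_v \psi(v_0),
\]
and both terms vanish by the two hypotheses on $v_0$.

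For \emph{(ii)}, I would parameterize the orbit by $t \in [0,T]$ and split the resulting integral into two parts. Using $\dot\phi^{v_0}_t = -\Gamma(\phi^{v_0}_t)$, the transport piece becomes $-\tfrac{1}{T}\int_0^T \tfrac{d}{dt}\psi(\phi^{v_0}_t)\, dt$, which telescopes to zero by periodicity of the orbit. The remaining mass term is handled by the mixing hypothesis (\ref{mixing}) applied to the continuous function $A\psi$ (where I abbreviate $A := M/\tM + \mathcal{H}(p) - v \cdot p$): the time average equals $(A\psi)(w(v_0)) = A(w(v_0))\psi(w(v_0))$, which is zero thanks to \Cref{utile2}.

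For \emph{(iii)}, the method-of-characteristics derivation in \Cref{subpart1} has already produced the classical identity $A\, Q_{p,\mathcal{H}(p)} + \Gamma \cdot \nabla_v Q_{p,\mathcal{H}(p)} = M/\tM$ wherever $Q_{p,\mathcal{H}(p)}$ is finite. Combined with (i) or (ii), the positive measure $\tilde\mu = Q_{p,\mathcal{H}(p)}\, d\nu + c\mu$ then satisfies the full spectral equation (\ref{probspec}) with eigenvalue $\mathcal{H}(p)$ in a distributional sense: the absolutely continuous part carries the inhomogeneity $M/\tM$, while the singular part $c\mu$ sits in the kernel of the homogeneous operator. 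The coefficient $c = 1 - \int_V Q'_{p,\mathcal{H}(p)}d\nu'$ is chosen precisely so that the integral condition closing the fixed-point relation between $\tilde\mu$ and the right-hand side of (\ref{probspec}) is satisfied.

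The main obstacle will be rigorously assembling the two pieces of $\tilde\mu$ in (iii), since $Q_{p,\mathcal{H}(p)}$ is unbounded near the singular support of $\mu$ and so the naive sum of a density and an atomic/orbital measure on overlapping limit sets requires care. The natural strategy is approximation from the regular regime: pick $H_n \searrow \mathcal{H}(p)$ from above so that $Q_{p,H_n}$ is bounded and classically solves the spectral equation, and extract a weak-$\ast$ limit of the corresponding measures $Q_{p,H_n}\, d\nu$ after appropriate renormalization. The Poincar\'e--Bendixson hypothesis (\ref{Poincare}) forces the singular part of the limit to concentrate on the fixed point or periodic orbit identified in \Cref{utile}, and the mixing assumption (\ref{mixing}) pins its distribution on that set to match $\delta_{v_0}$ or the uniform probability measure on the orbit, consistently with (i) and (ii).
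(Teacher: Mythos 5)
Your argument for all three parts follows the paper's own proof: (i) is immediate, (ii) uses the same decomposition into a transport piece (vanishing by periodicity after writing $\Gamma\cdot\nabla_v\psi(\phi^{v_0}_t)=-\frac{d}{dt}\psi(\phi^{v_0}_t)$) and a mass piece (vanishing by the mixing hypothesis combined with \Cref{utile2}), and (iii) rests on the same linear superposition of the regular part carrying the inhomogeneity $M/\tM$ with the singular part in the kernel. The extra approximation scheme you sketch at the end of (iii) is not part of the paper's proof and is in fact not needed: since $p\in\mathrm{Sing}(M,\Gamma)$ and $\tM$ is bounded below, $Q_{p,\mathcal{H}(p)}\in L^1(\nu)$, while $\mu$ is concentrated on a $\nu$-null set (a point or a one-dimensional orbit), so the two pieces of $\tilde\mu$ are mutually singular, $\tilde\mu$ is a well-defined positive measure, and the superposition argument closes directly by linearity.
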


\begin{proof}
(i) This is trivial since
\begin{equation*}
\left(\frac{M(v_0)}{\tM(v_0)}+\mathcal{H}(p)-\phi^{v_0}_t\cdot p\right)\psi(v_0)+\Gamma(v_0)\cdot \na_v \psi(v_0)=0\times \psi(v_0) + 0\cdot \na_v \psi(v_0)=0,
\end{equation*}
for all $\psi \in C^{\infty}(V)$.

(ii) Let $\psi \in C^{\infty}(V)$. Then,
\begin{eqnarray*}
\int_{\left\{ \phi^{v_0}_t,\; 0\leq t \leq T \right\}}\left(\frac{M'}{\tM'}-\mathcal{H}(p)-v'\cdot p\right)\psi'd\mu' +\int_{\left\{ \phi^{v_0}_t,\; 0\leq t \leq T \right\}}\Gamma'\cdot \nabla_v \psi'd\mu'\\
=\int_0^T \left(\frac{M(\phi^{v_0}_t)}{\tM(\phi^{v_0}_t)}+\mathcal{H}(p)-\phi^{v_0}_t\cdot p\right)\psi(\phi^{v_0}_t)dt+\int_0^T \Gamma(\phi^{v_0}_t)\cdot \nabla_v \psi(\phi^{v_0}_t)dt.
\end{eqnarray*}
Now, 
\begin{equation*}
\int_0^T \Gamma(\phi^{v_0}_t)\cdot \nabla_v \psi(\phi^{v_0}_t)dt=-\int_0^T \overset{\cdot}{\phi^{v_0}_t}\cdot \nabla_v \psi(\phi^{v_0}_t)dt=\left[\psi(\phi^{v_0}_t) \right]_{t=0}^{t=T}=0,
\end{equation*}
since $\phi^{v_0}$ is periodic with period $T$. Moreover, since
\begin{equation*}
\frac{1}{nT}\int_0^{nT}\left(\frac{M(\phi^{v_0}_t)}{\tM(\phi^{v_0}_t)}+\mathcal{H}(p)-\phi^{v_0}_t\right)dt=\frac{n}{nT}\int_{0}^T\left(\frac{M(\phi^{v_0}_t)}{\tM(\phi^{v_0}_t)}+\mathcal{H}(p)-\phi^{v_0}_t\right)dt,
\end{equation*}
for all $n\in \mathbb{N}$, and
\begin{equation*}
\frac{1}{nT}\int_0^{nT}\left(\frac{M(\phi^{v_0}_t)}{\tM(\phi^{v_0}_t)}+\mathcal{H}(p)-\phi^{v_0}_t\right)dt\underset{n\to +\infty}{\longrightarrow}\frac{M(w)}{\tM(w)}+\mathcal{H}(p)-w\cdot p,
\end{equation*}
we finally get
\begin{equation*}
\int_0^T \left(\frac{M(\phi^{v_0}_t)}{\tM(\phi^{v_0}_t)}+\mathcal{H}(p)-\phi^{v_0}_t\cdot p\right)\psi(\phi^{v_0}_t)dt=T\left(\frac{M(w)}{\tM(w)}+\mathcal{H}(p)-w\cdot p\right)\psi(w)=0,
\end{equation*}
which ends the proof.

(iii) This is straight-forward since $\mu$ solves \Cref{distribution} and since
\begin{equation*}
\left(\frac{M}{\tM}+\mathcal{H}(p)-v\cdot p\right)Q_{p,\mathcal{H}(p)} +\Gamma\cdot \na_v Q_{p,\mathcal{H}(p)} = \frac{M}{\tM},
\end{equation*}
for all $v\in \mathcal{D}(Q_{p,\mathcal{H}(p)})$.
\end{proof}

\begin{rem}
We do not necessarily have uniqueness of the spectral problem (\ref{probspec}) in the set of positive measures as there might exist several points in $V\setminus \mathcal{D}(Q_{p,\mathcal{H}(p))}$, which do not belong to the same orbit. However, we will only use the solutions of \Cref{resolution} and \Cref{inthesenseofd} in the rest of the paper. We will use the perturbed test function method of Evans \cite{evans_perturbed_1989} to build a sub- and a super-solution of the Hamilton-Jacobi equation \eqref{HJ}. When $p\in \mathrm{Sing}(M,\Gamma)^c$, we will use the $C^1$ solution of \Cref{resolution} to build the perturbed test function in question. It is worth mentioning that when $p\in \mathrm{Sing}(M,\Gamma)$, we will use the solution in the set of positive measures given by \Cref{inthesenseofd}. However, we will only use the regular part $Q_{p,\mathcal{H}(p)}$ in the super-solution procedure, whereas we will only use the singular part $\mu$ in the sub-solution procedure.
\end{rem}

\subsection{Examples}

Such a hamiltonian was already studied in a less general setting. Here are two examples taken from \cite{bouin_kinetic_2012,caillerie_large_2017-1} and \cite{caillerie_stochastic_2017}.

\begin{example} \textbf{The special case $\Gamma\equiv 0$.} \\
Suppose that $V$ is a compact set such that $0$ belongs to the interior of the convex hull of $V$. In this case, it is straight-forward to check that $M$ is a solution to \eqref{tm} hence $\tM=M$. Moreover, $\mathrm{Sing}(M,\Gamma)=\left\{ p\in \R^d,\; \int_V \frac{M(v')}{\mu(p)-v\cdot p}\leq 1 \right\}$, where $\mu(p):= \mathrm{max}_{v\in V}\left\{v\cdot p\right\}$. The hamiltonian is then defined by:
\begin{equation}\label{bouincalvez}
\int_V \frac{M(v)}{1+\mathcal{H}(p)-v\cdot p}d\nu(v)=1,\quad \mbox{if }p\notin \mathrm{Sing}(M,\Gamma),
\end{equation}
\begin{equation}\label{caillerie}
\mathcal{H}(p)=\mu(p)-1,\quad \mbox{if }p\in \mathrm{Sing}(M,\Gamma).
\end{equation}
When $V=[-1,1]$ and $M\equiv \frac{1}{2}$, then $\mathrm{Sing}(M,\Gamma)=\emptyset$ and $\mathcal{H}(p)=\frac{p-\mathrm{tanh}p}{\mathrm{tanh}p}$ for all $p\in \mathbb{R}^d$.
\end{example}
One can refer to \cite{bouin_kinetic_2012,caillerie_large_2017-1} for more details. Let us emphasize that the hamiltonian \eqref{bouincalvez}-\eqref{caillerie} is consistent with ours. Indeed, when $\Gamma\equiv 0$, then $\tM\equiv M$ and $\phi^v_s=v$, for all $s$, hence
$\displaystyle{\int_V \tM(v)\int_0^{+\infty}\frac{M(\phi^v_t)}{\tM(\phi^v_t)}\mathrm{exp}\left(-\int_0^t \left(\frac{M(\phi^v_s)}{\tM(\phi^v_s)}+\mathcal{H}(p)-\phi^v_s\cdot p\right)ds \right)dtd\nu(v)}$
\begin{equation*}
= \int_V M(v)\int_0^{+\infty}\mathrm{exp}\left(-\int_0^t \left(\frac{M(v)}{M(v)}+\mathcal{H}(p)-v\cdot p\right)ds \right)dtd\nu(v)
\end{equation*}
\begin{flushright}
$\displaystyle{=\int_V \frac{M(v)}{1+\mathcal{H}(p)-v\cdot p}d\nu(v)}.$\\
\par\end{flushright}
It is also straightforward to check that the hamiltonian from \cite{bouin_kinetic_2012,caillerie_large_2017-1} and ours coincide on $\mathrm{Sing}(M,\Gamma)$.

\begin{example}\label{chap3}
Let $d=3$, $V$ be the unit sphere that we parametrize with the usual spherical coordinates: $V=\left\{ (\theta,\varphi)\in [0,2\pi]\times[0,\pi] \right\}$, $v(\theta,\varphi):=(\mathrm{sin}(\varphi)\mathrm{cos}(\theta),\mathrm{sin}(\varphi)\mathrm{sin}(\theta),\mathrm{cos}\varphi)$ and let $M$ depend only on $\varphi$ and $\Gamma(\theta,\varphi)=(\mathrm{sin}\varphi,0)$. If $p\notin \mathrm{Sing}(M,\Gamma)$, then $\mathcal{H}(p)$ is implicitly defined by
\begin{equation*}
\int_V M(\varphi)\int_0^{+\infty}\mathrm{exp}\left(-\int_0^t \left(1+\mathcal{H}(p)-v(\theta-s,\varphi)\cdot p\right)ds\right)dt\frac{\mathrm{sin}(\varphi)d\theta d\varphi}{4\pi}=1,
\end{equation*}
and if $p\in \mathrm{Sing}(M,\G)$, then $\mathcal{H}(p)=\left\vert p\cdot e_3 \right\vert-1$, where $e_3=(0,0,1)$.
\end{example}
One can find a proof of this result in \cite{caillerie_stochastic_2017}, Chapter 3. Let us emphasize that the addition of the force term is a singular perturbation in our Hamilton-Jacobi framework since the hamiltonian of \Cref{chap3} is different, at least on $\mathrm{Sing}(M,\G)$, from the one obtained when $\G\equiv 0$.

\subsection{Properties of the hamiltonian}

\begin{proposition}\label{lipschitz}
The hamiltonian has the following properties:
\begin{enumerate}
\item[(i)] $0\in \mathrm{Sing}(M,\Gamma)^c$ and $\mathcal{H}(0)=0$.
\item[(ii)] $\mathcal{H}$ is continuous on $\R^d$ and $C^1$ on $\R^d\setminus\mathrm{Sing}(M,\G)$.
\item[(iii)] $\mathcal{H}$ is lipschitz-continuous.
\end{enumerate}
\end{proposition}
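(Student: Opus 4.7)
I prove (i) by explicit computation at $p=0$; (iii) by a direct pointwise comparison of $Q_{p,H}$ in the $p$ variable, which automatically yields the continuity in (ii); and the $C^1$-regularity in (ii) via the implicit function theorem applied to $\Phi(p,H):=\int_V \tM' Q'_{p,H}\,d\nu'$.

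\emph{Part (i).} At $p=0$, $H=0$ the integrand of $Q_{0,0}(v)$ is an exact derivative,
\begin{equation*}
\frac{M(\phi^v_t)}{\tM(\phi^v_t)}\exp\left(-\int_0^t \frac{M(\phi^v_s)}{\tM(\phi^v_s)}\,ds\right)=-\frac{d}{dt}\exp\left(-\int_0^t \frac{M(\phi^v_s)}{\tM(\phi^v_s)}\,ds\right).
\end{equation*}
Lemmas \ref{borneinf} and \ref{Lemmma} give $M/\tM\geq \min M/\max \tM>0$, so the exponential tends to $0$ at infinity and $Q_{0,0}\equiv 1$. Then $\Phi(0,0)=\int_V \tM'\,d\nu'=1$, proving $\mathcal{H}(0)\leq 0$. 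For $H<0$ close to $0$, $M/\tM+H$ is still uniformly positive, so $Q_{0,H}$ is finite and pointwise strictly larger than $Q_{0,0}=1$; thus $\Phi(0,H)\in(1,+\infty)$, which places $0$ in $\Sing^c$ and forces $\mathcal{H}(0)=0$.

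\emph{Part (iii), and continuity in (ii).} Let $R:=\sup_{v\in V}|v|<+\infty$ by compactness of $V$. For all $p,q\in\R^d$, $s\geq 0$, $v\in V$, one has $|\phi^v_s\cdot(p-q)|\leq R|p-q|$, so a direct comparison of exponents in the definition \eqref{defQ} of $Q$ gives
\begin{equation*}
Q_{q,H+R|p-q|}(v)\leq Q_{p,H}(v)\qquad\text{pointwise in }v.
\end{equation*}
Integrating against $\tM$ and taking $H=\mathcal{H}(p)$ yields $\Phi(q,\mathcal{H}(p)+R|p-q|)\leq 1$, hence $\mathcal{H}(q)\leq \mathcal{H}(p)+R|p-q|$ by the definition (\ref{hamiltonian}). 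Swapping the roles of $p$ and $q$ gives $|\mathcal{H}(p)-\mathcal{H}(q)|\leq R|p-q|$, which proves (iii) and supplies the continuity claim in (ii).

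\emph{$C^1$-regularity in (ii), and main obstacle.} Fix $p_0\in\Sing^c$. By definition there exists $H_0<\mathcal{H}(p_0)$ with $\Phi(p_0,H_0)<+\infty$. Choose an open neighborhood $\mathcal{O}$ of $(p_0,\mathcal{H}(p_0))$ small enough that $H-H_0-R|p-p_0|\geq \eta>0$ on $\mathcal{O}$. The comparison from (iii) then gives, for every $(p,H)\in\mathcal{O}$, a pointwise factorization of the exponent that implies $Q_{p,H}(v)\leq e^{-\eta t_{\mathrm{shift}}}\,Q_{p_0,H_0}(v)$-type bounds; more importantly, the formal $(p,H)$-derivatives of $Q_{p,H}$ carry at most polynomial $t$-factors, and the elementary inequality $t^k e^{-\eta t/2}\leq C_{k,\eta}$ then provides a $\tM\,d\nu$-integrable majorant of these derivatives uniformly in $(p,H)\in\mathcal{O}$. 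Dominated convergence makes $\Phi$ a $C^1$ map on $\mathcal{O}$ with $\partial_H\Phi<0$ (the integrand being strictly negative), and since $\Phi(p_0,\mathcal{H}(p_0))=1$ by Proposition \ref{resolution}\,(i), the implicit function theorem produces a $C^1$ local solution of $\Phi(p,\cdot)=1$ that must coincide with $\mathcal{H}$ by strict monotonicity in $H$. The main technical hurdle is precisely this uniform domination: one has to stay strictly inside $\Sing^c$, i.e.\ exploit the subcritical $H_0$, in order to absorb the $t$-polynomial factors arising from differentiation without blow-up as $H$ approaches the critical value $\mathcal{H}(p_0)$.
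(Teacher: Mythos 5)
Your proof is correct, and it is genuinely different from the paper's on the Lipschitz part, while being a more careful version of the same strategy on the others.

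For part (i) you and the paper both observe that $\Phi(0,0):=\int_V\tM' Q'_{0,0}\,d\nu'=1$, but you add the argument (perturbing $H$ slightly below $0$ and using $M/\tM\geq\min M/\max\tM>0$) that places $0$ strictly in $\mathrm{Sing}(M,\Gamma)^c$; the paper's proof only establishes $\Phi(0,0)=1$ and leaves the ``$0\in\mathrm{Sing}^c$'' claim implicit. For the $C^1$ statement in (ii), both proofs rely on the implicit function theorem for $\Phi(p,H)=1$, but you supply the hypotheses that the paper takes for granted: you use a strictly subcritical $H_0<\mathcal{H}(p_0)$ to manufacture a uniform $e^{-\eta t}$ gain, absorb the $t$-polynomial factors arising from differentiation with $t^ke^{-\eta t/2}\leq C_{k,\eta}$, and obtain a $\tM\,d\nu$-integrable dominant, which is exactly what one needs to justify differentiating under the integral. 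You also restrict to $\mathrm{Sing}^c$, which is all the statement asks for, whereas the paper attempts a (rather garbled) analogous implicit relation on $\overset{\circ}{\mathrm{Sing}}$.

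The most substantive divergence is in (iii). The paper differentiates the implicit relation to get $\|\nabla\mathcal{H}\|_\infty\leq\sup_V|v|$ on $\R^d\setminus\partial\mathrm{Sing}(M,\Gamma)$, and then asserts Lipschitz continuity on $\R^d$; that last step (gluing a gradient bound off a closed set, plus continuity across it, into a global Lipschitz bound) is glossed over. Your argument sidesteps this entirely: the pointwise monotonicity $Q_{q,H+R|p-q|}\leq Q_{p,H}$ with $R=\sup_V|v|$, combined with $\Phi(p,\mathcal{H}(p))\leq 1$ (which holds for \emph{every} $p$, including $p\in\mathrm{Sing}$, by the monotone limit $\Phi(p,\mathcal{H}(p))=\lim_{H\searrow\mathcal{H}(p)}\Phi(p,H)\leq 1$) and the infimum definition \eqref{hamiltonian}, yields $\mathcal{H}(q)\leq\mathcal{H}(p)+R|p-q|$ directly and symmetrically. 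This is more elementary, works uniformly across $\partial\mathrm{Sing}(M,\Gamma)$, and makes continuity in (ii) an immediate corollary rather than a separate boundary argument. The one cosmetic remark is that the intermediate bound you describe as ``$Q_{p,H}(v)\leq e^{-\eta t_{\mathrm{shift}}}Q_{p_0,H_0}(v)$-type'' should be stated precisely as an inequality on the exponential kernel, namely that the exponent for $(p,H)$ dominates the exponent for $(p_0,H_0)$ plus $\eta t$, but the substance of the domination argument is right.
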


\begin{proof}
(i) This result is trivial once one notices that
\begin{equation*}
\int_V \tM(v) \int_0^{+\infty}\frac{M(\phi^v_t)}{\tM(\phi^v_t)}\mathrm{exp}\left(-\int_0^t \frac{M(\phi^v_s)}{\tM(\phi^v_s)}ds\right)dtd\nu(v)=\int_V \tM(v)\frac{M(v)}{\tM(v)}d\nu(v)=1.
\end{equation*}

(ii) On $\mathrm{Sing}(M,\G)^c$, the function $\mathcal{H}$ is implicitly defined by the relation
\begin{equation}\label{implicit1}
\int_V \tM(v)Q_{p,\mathcal{H}(p)}(v)d\nu(v)=1.
\end{equation}
On $\overset{\circ}{\mathrm{Sing}(M,\G)}$, $\mathcal{H}(p)$ is implicitly defined by the relation
\begin{equation}\label{implicit2}
\int_V \tM(v)Q_{p,\mathcal{H}(p)}(v)d\nu(v)-\underset{H\in \mathfrak{B}(p)}{\mathrm{max}}\left\{ \int_V \tM(v)Q_{p,H}(v)d\nu(v) \right\}=0,
\end{equation}
where $\mathfrak{B}(p)=\left\{ H\in \mathbb{R},\quad\int_V \tM(v)Q_{p,H}(v)d\nu(v)<+\infty\right\}$.
Hence, by the implicit function Theorem, $\mathcal{H}$ is $C^1$ on $\overset{\circ}{\mathrm{Sing}(M,\G)}\cup \mathrm{Sing}(p)^c=\R^d\setminus \partial\mathrm{Sing}(M,\G)$. Moreover, since for all $p\in \partial\mathrm{Sing}(M,\G)$,
\begin{equation*}
\underset{H\in\mathfrak{B}(p)}{\mathrm{max}}\left\{ \int_V \tM(v)Q_{p,H}(v)d\nu(v) \right\}=1,
\end{equation*}
we also conclude that $\mathcal{H}$ is continuous on $\R^d$.

(iii) Differentiating \eqref{implicit1} and \eqref{implicit2} with respect to $p$ and recalling \eqref{defQ}, we get for all $p\in \R^d\setminus \partial \mathrm{Sing}(M,\Gamma)$,
\begin{equation*}
\int_V \tM(v)\int_0^{+\infty} \frac{M(\phi^v_t)}{\tM(\phi^v_t)} \left(\int_0^t \left(\nabla\mathcal{H}(p)-\phi^v_s\right)ds\right)\mathrm{exp}\left( \int_0^t \left(\frac{M(\phi^v_s)}{\tM(\phi^v_s)}+\mathcal{H}(p)-\phi^v_s\cdot p\right)ds \right)dtd\nu(v)=0.
\end{equation*}
Hence, $\left\Vert \nabla \mathcal{H} \right\Vert_{\infty}\leq \underset{v\in V}{\mathrm{sup}}\;\left\vert v \right\vert$ on $\R^d\setminus \partial \mathrm{Sing}(M,\Gamma)$, from which we deduce that $\mathcal{H}$ is lipschitz-continuous.

\end{proof}

\section{Convergence to the Hamilton-Jacobi limit}

\subsection{A priori estimates}\label{apriori}

\begin{proposition}\label{aprioriprop}
Let us assume that (\ref{infM}) and (\ref{divgamma}) hold and that $\tM$ satisfies (\ref{tm}). Let $\varphi^{\eps}$ satisfy \Cref{main}. Let us assume that the initial condition is well-prepared: $\varphi^{\eps}(0,x,v)=\varphi_0(x)\geq 0$. Then, $\varphi^{\eps}$ is uniformly bounded with respect to $x$, $v$, and $\eps$. More precisely, for all $0\leq t \leq T$,
\begin{equation}
0\leq \varphi^{\eps}(t,\cdot,\cdot) \leq \left\Vert \varphi_0 \right\Vert_{\infty} + \frac{\mathrm{max}\,M}{\mathrm{min}\,\tM}T
\end{equation}
\end{proposition}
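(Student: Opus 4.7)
The plan is to split the two-sided estimate into an upper bound obtained by the method of characteristics and a lower bound obtained by a minimum principle.

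For the upper bound, I would apply the method of characteristics to \Cref{main} directly. Given $(t, x, v) \in [0, T] \times \R^d \times V$, let $(X_s, V_s)_{s \in [0, t]}$ solve $\dot X_s = V_s$, $\dot V_s = \Gamma(V_s)/\eps$ with terminal condition $(X_t, V_t) = (x, v)$. The tangency $\Gamma \cdot d\overrightarrow{S} = 0$ on $\partial V$ guarantees $V_s \in V$ throughout, so the characteristic is well defined. Along it, \Cref{main} reads
\begin{equation*}
\frac{d}{ds}\varphi^\eps(s, X_s, V_s) = \frac{M(V_s)}{\tM(V_s)}\int_V \tM(v')\bigl(1 - e^{(\varphi^\eps(s, X_s, V_s) - \varphi^\eps(s, X_s, v'))/\eps}\bigr) d\nu(v').
\end{equation*}
Since $1 - e^y \leq 1$ for every $y \in \R$ and $\int_V \tM\, d\nu = 1$, the right-hand side is bounded above by $\frac{M(V_s)}{\tM(V_s)} \leq \frac{\max M}{\min \tM}$, where the last inequality uses \Cref{borneinf} and $M \in C^0(V)$. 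Integrating from $0$ to $t$ and invoking the well-prepared initial condition $\varphi^\eps(0, X_0, V_0) = \varphi_0(X_0) \leq \|\varphi_0\|_\infty$ yields the stated upper bound.

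For the lower bound, the key observation is that $\varphi \equiv 0$ is itself a solution of \Cref{main} (both sides vanish identically), so non-negativity of $\varphi^\eps$ should be a comparison statement with the non-negative initial datum $\varphi_0$. To realize this rigorously on the non-compact domain $\R^d \times V$, I would work with the coercive functional
\begin{equation*}
m_\eta(t) := \inf_{(x, v) \in \R^d \times V}\bigl(\varphi^\eps(t, x, v) + \eta\sqrt{1 + |x|^2}\bigr), \qquad \eta > 0,
\end{equation*}
whose infimum is attained at some $(x^*, v^*)$. At such a point, $\nabla_x \varphi^\eps(t, x^*, v^*) = \mathcal{O}(\eta)$ and $\Gamma \cdot \nabla_v \varphi^\eps(t, x^*, v^*) = 0$ (the case $v^* \in \partial V$ is handled exactly as in the proof of \Cref{borneinf}, using the tangency of $\Gamma$). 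Crucially, $\varphi^\eps(t, x^*, v^*) \leq \varphi^\eps(t, x^*, v')$ for every $v' \in V$, so the nonlocal source on the right-hand side of \Cref{main} is non-negative. Hence $\partial_t \varphi^\eps(t, x^*, v^*) \geq -C\eta$ with $C := \sup_{v\in V} |v|$; a Gr\"onwall-type integration gives $m_\eta(t) \geq m_\eta(0) - C\eta t$, and letting $\eta \to 0$ yields $\inf \varphi^\eps(t, \cdot, \cdot) \geq \inf \varphi_0 \geq 0$.

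The substantive step is the lower bound: the nonlocal source prevents a pure characteristic computation, and one has to instead exploit that at a minimum in $v$ the exponent $\varphi^\eps - \varphi'^\eps$ is non-positive, which forces the source to be non-negative. The only real technicality is the non-compactness of $\R^d$, easily handled by the coercive $x$-penalization above; once this is accepted both bounds reduce to essentially one-line estimates on the right-hand side of \Cref{main}.
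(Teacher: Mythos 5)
Your upper bound is exactly the paper's argument: integrate \Cref{main} along the characteristics with terminal data $(x,v)$, use $1-e^y\leq 1$ and $\int_V\tM\,d\nu=1$ to bound the nonlocal source by $\max M/\min\tM$, and invoke \Cref{borneinf} and \Cref{Lemmma}. For the lower bound the paper is very terse, simply remarking that $0$ trivially satisfies \Cref{main} and implicitly invoking comparison; your coercive $x$-penalization plus minimum-in-$v$ argument is a correct and considerably more explicit realization of that comparison, and you correctly identify the key mechanism (at a minimum over $v$ one has $\varphi^\eps(t,x^*,v^*)-\varphi^\eps(t,x^*,v')\leq 0$, so $1-e^{(\cdot)/\eps}\geq 0$ and the nonlocal source is non-negative). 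So the two proofs coincide in substance; yours supplies the detail the paper elides. One technical caveat on your final step: passing from the pointwise inequality $\partial_t\varphi^\eps(t,x^*,v^*)\geq -C\eta$ at a minimizer to $m_\eta(t)\geq m_\eta(0)-C\eta t$ is not literally a Gr\"onwall integration, since you would first need $m_\eta$ to be absolutely continuous (which requires an a priori bound you do not yet have); the clean fix is to add a further linear-in-$t$ penalization $(C\eta+\delta)t$ and argue by contradiction at the resulting space-time minimum, or to compare left and right Dini derivatives of $m_\eta$. This is routine but worth spelling out.
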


\begin{proof}
Let us recall that, under assumptions (\ref{infM}) and (\ref{divgamma}), the results of \Cref{borneinf} and \Cref{Lemmma} hold.

Let $\left(\mathcal{X}^{\eps},\mathcal{V}^\eps\right)$ be the characteristics associated with (\ref{main}):
\begin{equation*}
\begin{cases}
\overset{\cdot}{\mathcal{X}_{s,t}^{x,v}}=\mathcal{V}_{s,t}^{x,v},\\
\mathcal{X}^{x,v}_{t,t}=x,\\
\overset{\cdot}{\mathcal{V}_{s,t}^{x,v}}=\frac{\Gamma\left(\mathcal{V}_{s,t}^{x,v}\right)}{\eps},\\
\mathcal{V}_{t,t}^{x,v}=v.
\end{cases}
\end{equation*}
Here, we dropped the $\eps$ for readability reasons. Using the method of characteristics, we get the following relation
\begin{eqnarray*}
\varphi^{\eps}(t,x,v)&=&\varphi_0(\mathcal{X}^{x,v}_{0,t})\\
&+&\int_0^t\frac{M\left(\mathcal{V}_{s,t}^{x,v}\right)}{\tM\left(\mathcal{V}_{s,t}^{x,v}\right)}\int_V\tM(v')\left(1-\mathrm{exp}\left(\frac{\varphi^\eps\left(s,\mathcal{X}^{x,v}_{s,t},\mathcal{V}^{x,v}_{s,t}\right)-\varphi^{\eps}\left(s,\mathcal{X}^{x,v}_{s,t},v'\right)}{\eps}\right)\right)d\nu(v')ds\\
& \leq & \varphi_0(\mathcal{X}^{x,v}_{0,t})+ \int_0^t\frac{M\left(\mathcal{V}_{s,t}^{x,v}\right)}{\tM\left(\mathcal{V}_{s,t}^{x,v}\right)}\int_V\tM(v')d\nu(v')ds\\
& \leq &  \left\Vert \varphi_0\right\Vert_{\infty} + \int_0^T \frac{\mathrm{max}\,M}{\mathrm{min}\,\tM}\int_V \tM(v')d\nu(v')ds= \left\Vert \varphi_0 \right\Vert_{\infty} + \frac{\mathrm{max}\,M}{\mathrm{min}\,\tM}T,
\end{eqnarray*}
so we have an upper bound on $\varphi^\eps$.

We get the lower bound by noticing that $0$ trivially satisfies \eqref{main}.

 

\end{proof}

\subsection{Proof of \Cref{thm}}

In this Section, we prove \Cref{thm} using the half-relaxed limits method of Barles and Perthame \cite{barles_exit_1988} in the same spirit as in \cite{bouin_spreading_2017}. Additionally, we use the method of the perturbed test function of Evans \cite{evans_perturbed_1989} using the same ideas as in \cite{bouin_kinetic_2012,bouin_spreading_2017,caillerie_large_2017-1}.

Thanks to \Cref{aprioriprop}, the sequence $(\varphi^\eps)_{\eps}$ is uniformly bounded in $L^\infty$ with respect to $\eps$. We can thus define its lower and upper semi continuous envelopes:
\begin{equation}\label{envelopes}
\varphi^*(t,x,v) = \limsup_{\substack{\eps \to 0\\(s,y,w) \to (t,x,v)}} \varphi^\eps(s,y,w), \qquad \varphi_*(t,x,v) = \liminf_{\substack{\eps \to 0\\(s,y,w) \to (t,x,v)}} \varphi^\eps(s,y,w).
\end{equation}
We will prove that $\varphi^*$ and $\varphi_*$ are respectively a sub- and a super-solution of the Hamilton-Jacobi equation. In order to do that, we need to prove that neither functions depend on the velocity variable. For this, we will use a similar proof to \cite{bouin_spreading_2017}. We write it here for the sake of self-containedness.

\begin{lemma}\label{indep}
Both $\varphi^*$ and $\varphi_*$ are constant with respect to the velocity variable on $\R^*_+\times\R^d$.
\end{lemma}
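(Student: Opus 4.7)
My plan is to argue by contradiction using the Barles--Perthame semi-relaxed limits / envelope method, applied symmetrically to $\varphi^*$ and $\varphi_*$. The key trick is to use a test function \emph{independent of $v$}: this forces the envelope-theorem limit point $v_n$ to be a critical point (max or min) of $\varphi^{\eps_n}(t_n,x_n,\cdot)$ on $V$, killing the singular $\Gamma/\eps \cdot \nabla_v\varphi^{\eps_n}$ term in (\ref{main}). The LHS of the equation then reduces to bounded quantities while, if the envelope has a $v$-gap, the exponential in the RHS forces it to blow up in the opposite direction, giving the contradiction.

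For $\varphi^*$: suppose $\varphi^*(t_0,x_0,\cdot)$ is non-constant; by USC and compactness of $V$, its maximum $M$ is attained, and some $v_1\in V$ satisfies $\varphi^*(t_0,x_0,v_1)<M-\delta$ for $\delta>0$. Take $\psi(t,x):=C(|t-t_0|^2+|x-x_0|^2)$; the envelope argument yields $\eps_n\to 0$ and $(t_n,x_n,v_n)$ converging to some maximizer of $\varphi^*(t_0,x_0,\cdot)$, with $\varphi^{\eps_n}(t_n,x_n,v_n)\to M$ and (since $\psi$ is $v$-free) $v_n$ a maximizer of $\varphi^{\eps_n}(t_n,x_n,\cdot)$. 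On the open neighborhood $U:=\{\varphi^*(t_0,x_0,\cdot)<M-\delta/2\}$ (containing $v_1$ by USC), the pointwise bound $\limsup_n\varphi^{\eps_n}(t_n,x_n,v')\leq\varphi^*(t_0,x_0,v')$ combined with $\varphi^{\eps_n}(v_n)\to M$ gives $\varphi^{\eps_n}(v_n)-\varphi^{\eps_n}(v')\geq\delta/4$ on $U$ for $n$ large. Then the integrand $\tM(v')\bigl(1-\exp((\varphi^{\eps_n}(v_n)-\varphi^{\eps_n}(v'))/\eps_n)\bigr)$ tends to $-\infty$ pointwise on $U$; the reverse Fatou lemma (dominated by $\tM$) makes the whole $V$-integral tend to $-\infty$, so the RHS of (\ref{main}) does too. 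This contradicts the bounded LHS $\partial_t\psi+v_n\cdot\nabla_x\psi$, so $\varphi^*$ is constant in $v$.

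For $\varphi_*$ the same scheme is mirrored at a minimizer with the value-shifted test function $\psi(t,x):=-C(|t-t_0|^2+|x-x_0|^2)+m$, where $m:=\min_{v\in V}\varphi_*(t_0,x_0,v)$, producing $(t_n,x_n,v_n)$ with $v_n$ a minimizer of $\varphi^{\eps_n}(t_n,x_n,\cdot)$. Around a $v_1$ with $\varphi_*(t_0,x_0,v_1)>m+\delta$, the LSC bound $\liminf_n\varphi^{\eps_n}(t_n,x_n,v')\geq\varphi_*(t_0,x_0,v')$ together with $\varphi^{\eps_n}(v_n)\to m$ forces $\liminf_n \tM(v')\bigl(1-\exp((\varphi^{\eps_n}(v_n)-\varphi^{\eps_n}(v'))/\eps_n)\bigr)\geq\tM(v')$ on a neighborhood of $v_1$, so Fatou's lemma gives $\mathrm{RHS}\geq c>0$ in the limit while the LHS tends to $0$: contradiction. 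The main obstacle here is precisely the lower-semicontinuous envelope step: unlike the USC case, making $(t_0,x_0,v_0)$ a genuine local minimum of $\varphi_*-\psi$ requires controlling the LSC modulus of $\varphi_*$ in $(t,x)$, which is not automatic from the $L^\infty$ bound of \Cref{aprioriprop}. I would overcome this by complementing the test function step with the characteristic estimate obtained by integrating (\ref{main}) along $(\dot x,\dot v)=(v,\Gamma/\eps)$: the uniform upper bound $\mathrm{RHS}\leq M/\tM$ gives $\varphi^{\eps}(t+\eps T,X_\eps,\phi^v_{-T})\leq\varphi^{\eps}(t,x,v)+C\eps T$, and passing to the semi-relaxed limits shows both envelopes are non-decreasing along the orbits of $\phi$; combining with (\ref{Poincare})--(\ref{mixing}), which collapse every $\omega$-limit to a single representative $w(v)\in V$, then pins down the $v$-constancy.
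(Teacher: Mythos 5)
Your plan is genuinely different from the paper's. The paper uses a test function $\psi\in C^1(\R_+^*\times\R^d\times V)$ touching $\varphi^*$ (resp.\ $\varphi_*$) and handles the singular term $\frac{\Gamma}{\eps}\cdot\nabla_v\psi$ by multiplying the equation by $\eps$, which leaves the bounded quantity $\Gamma\cdot\nabla_v\psi$; it then deduces that $\eps\int_{V'}\tM'e^{(\varphi^\eps(v^\eps)-\varphi^\eps(v'))/\eps}d\nu'$ is uniformly bounded, applies Jensen's inequality to extract $\limsup_\eps\int_{V'}\tM'(\varphi^\eps(v^\eps)-\varphi^\eps(v'))d\nu'\le 0$, decomposes, and concludes by Fatou. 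You instead insist on a $v$-independent $\psi$ so that the singular term disappears because $v^\eps$ is a critical point of $\varphi^\eps(t^\eps,x^\eps,\cdot)$, and then derive a contradiction from the pointwise blow-up of the exponential. Both routes work, and yours is in some ways more elementary; the paper's buys you freedom in the choice of $\psi$ (no need for $v^\eps$ to extremize $\varphi^\eps$ in $v$, which also sidesteps the boundary case $v^\eps\in\partial V$, which you do not address) at the cost of the Jensen step.

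Two remarks on the concern you flag and your proposed repair. First, the worry is not specific to $\varphi_*$: the difficulty of making $(t^0,x^0,v^0)$ a strict local extremum of $\varphi^\bullet-\psi$ with a $v$-free quadratic $\psi$ is the same in the USC and LSC cases, and the standard Barles--Perthame resolution (allow the extremum to move to a nearby point $(\bar t,\bar x,\bar v)$, observe that $\bar v$ is then automatically an extremizer of $\varphi^\bullet(\bar t,\bar x,\cdot)$, and send the penalization constant to $+\infty$) handles both symmetrically. Second, the characteristic-estimate workaround is not needed and, as sketched, does not close the argument: integrating \eqref{main} along $(\dot x,\dot v)=(v,\Gamma/\eps)$ and passing to the semi-relaxed limits yields only monotonicity of $\varphi^*$ and $\varphi_*$ along individual orbits of $\phi$, i.e.\ $\varphi^\bullet(t,x,\phi^v_{-T})\le\varphi^\bullet(t,x,v)$. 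Conditions \eqref{Poincare}--\eqref{mixing} then identify only the $\omega$-limit representative $w(v)$ of the \emph{single} orbit through $v$; different $v$ may have different $\omega$-limits, so you obtain $\varphi^\bullet(w(v))\le\varphi^\bullet(v)$ orbitwise, not constancy on $V$. The main Fatou/contradiction argument you describe is what does the work, applied mutatis mutandis to both envelopes, and the auxiliary characteristic argument should be dropped.
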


\begin{proof}
Let $(t^0,x^0,v^0)\in\R^*_+\times\R^d\times V$ and $\psi\in C^1\left(\R^*_+\times\R^d\times V\right)$ be a test function such that $\varphi^*-\psi$ has a strict local maximum at $(t^0,x^0,v^0)$. Then, there exists a sequence $(t^\eps,x^\eps,v^\eps)$ such that $\varphi^\eps-\psi$ attains its maximum at $(t^\eps,x^\eps,v^\eps)$ and such that $(t^\eps,x^\eps,v^\eps)\to(t^0,x^0,v^0)$. Thus, $\mathrm{lim}_{\eps\to0}\varphi^{\eps}(t^\eps,x^\eps,v^\eps)=\varphi^*(t,x,v)$. Moreover, at point $(t^\eps,x^\eps,v^\eps)$, we have:
\begin{equation*}
\partial_t \psi + v^{\eps} \cdot \nabla_x \psi +\frac{\G(v^\eps)}{\eps}\cdot \nabla_v \psi = \frac{M(v^\eps)}{\tM(v^\eps)}\int_V \tM(v')\left(1- e^{\frac{\varphi^{\eps} -\varphi^{\eps'}}{\eps}} \right)d\nu(v').
\end{equation*}
From this, and using the fact that
\begin{equation*}
0<\mathrm{min}\,M\leq M\leq \mathrm{max}\,M<+\infty,
\end{equation*}
\begin{equation*}
0<\mathrm{min}\,\tM\leq \tM\leq \mathrm{max}\,\tM<+\infty,
\end{equation*}
we deduce that $\eps\int_{V'} \tM(v') e^{\frac{\varphi^{\eps}(t^\eps,x^\eps,v^\eps) -\varphi^{\eps}(t^\eps,x^\eps,v')}{\eps}} d\nu(v')$ is uniformly bounded for all $V'\subset V$. By the Jensen inequality,
\begin{equation*}
\eps \mathrm{exp}\left(\frac{1}{\eps\left\vert V' \right\vert_{\tM}}\int_{V'}\tM(v')\left(\varphi^{\eps}(t^\eps,x^\eps,v^\eps)-\varphi^{\eps}(t^\eps,x^\eps,v')\right)d\nu(v')\right)\leq \frac{\eps}{\left\vert V' \right\vert_{\tM}} \int_{V'}\tM(v')e^{\frac{\varphi^{\eps}(t^\eps,x^\eps,v^\eps) -\varphi^{\eps}(t^\eps,x^\eps,v')}{\eps}} d\nu(v'),
\end{equation*}
where $\left\vert V' \right\vert_{\tM}:=\displaystyle{\int_{V'}\tM(v')d\nu(v')}$. We deduce that
\begin{equation*}
\underset{\eps\to0}{\mathrm{lim}\,\mathrm{sup}} \int_{V'}\tM(v')\left(\varphi^{\eps}(t^\eps,x^\eps,v^\eps)-\varphi^{\eps}(t^\eps,x^\eps,v')\right)d\nu(v') \leq0
\end{equation*}
We write
\begin{align*}
\int_{V'} \tM(v')\left(\varphi^{\eps}(v^{\eps}) -\varphi^{\eps}(v') \right) d\nu(v') &= \int_{V'} \tM(v')\left[\left( \varphi^{\eps}(v^{\eps}) - \psi(v^{\eps})\right) - \left( \varphi^{\eps}(v') - \psi(v') \right) + \left( \psi(v^{\eps}) - \psi(v')\right)\right] d\nu(v')\\
&= \int_{V'}  \tM(v')\left[\left( \varphi^{\eps}(v^{\eps}) - \psi(v^{\eps})\right) - \left( \varphi^{\eps}(v') - \psi(v') \right) \right] d\nu(v')\\ &\hfill + \int_{V'} \tM(v') \left( \psi(v^{\eps}) - \psi(v')\right)  d\nu(v')
\end{align*}
We can thus use the Fatou Lemma, together with $- \limsup_{\eps \to 0} \varphi^{\eps}(t^{\eps},x^{\eps},v') \geq - \varphi^*(t^0,x^0,v')$ to get
\begin{align*}
\left(\int_{V'} \tM(v')d\nu(v')\right) \varphi^*(v^0) - \int_{V'}\tM(v')\varphi^*(v') d\nu(v') & = \int_{V'} \tM(v')\left(\varphi^*(v^0) -\varphi^*(v') \right)  d\nu(v') \\
&\leq
\int_{V'} \tM(v') \liminf_{\eps \to 0}  \left(\varphi^{\eps}(v^{\eps}) -\varphi^{\eps}(v') \right) d\nu(v') \\
&\leq \liminf_{\eps \to 0} \left( \int_{V'} \tM(v') \left(\varphi^{\eps}(v^{\eps}) -\varphi^{\eps}(v') \right) d\nu(v') \right)\\ 
&\leq \limsup_{\eps \to 0} \left( \int_{V'} \tM(v') \left(\varphi^{\eps}(v^{\eps}) -\varphi^{\eps}(v') \right) d\nu(v') \right)\\
&\leq 0, 
\end{align*}
We shall deduce, since the latter is true for any $\vert V' \vert$ that
\begin{equation*}
\varphi^*(t^0,x^0,v^0) \leq \inf_{V} \varphi^*(t^0,x^0,\cdot)
\end{equation*}
and thus $\varphi^*$ is constant in velocity.

To prove that $\varphi_*$ is constant with respect to the velocity variable, we use the same technique with a test function $\psi$ such that $\varphi^\eps-\psi$ has a local strict minimum at $(t^0,x^0,v^0)$.
\end{proof}

We shall now prove the following fact
\begin{prop}\label{viscosity}
Let $\varphi^\eps$ be a solution of (\ref{main}) and let $\vp^*$ and $\vp_*$ be defined by (\ref{envelopes}).
\begin{enumerate}\label{prop:semilimits}
\item[(i)] The function $\varphi_*$ is a viscosity super-solution to the Hamilton-Jacobi equation \eqref{HJ} on $\R_+^* \times \R^n$.
\item[(ii)] The function $\varphi^*$ is a viscosity sub-solution to the Hamilton-Jacobi equation \eqref{HJ} on $\R_+^* \times \R^n$.
\end{enumerate}
\end{prop}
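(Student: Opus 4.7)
The plan is to combine the half-relaxed limits framework with the perturbed test function method of Evans. Since $\varphi^*$ and $\varphi_*$ are independent of $v$ by \Cref{indep}, I restrict to test functions $\psi \in C^1(\R_+^* \times \R^d)$.

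For the super-solution property (i) in the regular case, let $\psi$ be such that $\varphi_* - \psi$ has a strict local minimum at $(t_0, x_0)$ and set $p_0 := \nabla_x \psi(t_0,x_0)$. I would introduce the perturbed test function
\begin{equation*}
\psi^\eps(t,x,v) := \psi(t,x) - \eps \log Q_{p_0, \mathcal{H}(p_0)}(v),
\end{equation*}
which is admissible when $p_0 \in \mathrm{Sing}(M,\Gamma)^c$ since \Cref{resolution}(i) ensures $Q_{p_0,\mathcal{H}(p_0)}$ is $C^1$, strictly positive and bounded. Standard half-relaxed limits arguments then provide a sequence $(t^\eps, x^\eps, v^\eps) \to (t_0, x_0, v^0)$ of local minima of $\varphi^\eps - \psi^\eps$. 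At such a minimum, evaluating \Cref{main} and using the inequality $\varphi^\eps - \varphi^{\eps'} \leq \psi^\eps - \psi^{\eps'}$ to bound the right-hand side from below, then substituting the spectral identity
\begin{equation*}
\G \cdot \nabla_v Q_{p_0,\mathcal{H}(p_0)} = \left(v \cdot p_0 - \tfrac{M}{\tM} - \mathcal{H}(p_0)\right) Q_{p_0,\mathcal{H}(p_0)} + \tfrac{M}{\tM}
\end{equation*}
(a direct consequence of the definition \eqref{defQ}) together with the normalization $\int_V \tM' Q'_{p_0,\mathcal{H}(p_0)} d\nu' = 1$ from \Cref{resolution}(i), every $v^\eps$-dependent term should cancel, leaving
\begin{equation*}
\partial_t \psi(t^\eps, x^\eps) + v^\eps \cdot \bigl(\nabla_x \psi(t^\eps,x^\eps) - p_0\bigr) + \mathcal{H}(p_0) \;\geq\; 0.
\end{equation*}
Passing to the limit $\eps \to 0$ yields the desired inequality $\partial_t \psi(t_0, x_0) + \mathcal{H}(p_0) \geq 0$. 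The sub-solution statement (ii) in the regular case follows from a symmetric argument at a strict local maximum, replacing the minimum inequality by $\varphi^\eps - \varphi^{\eps'} \geq \psi^\eps - \psi^{\eps'}$.

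The delicate point is the singular case $p_0 \in \mathrm{Sing}(M,\Gamma)$, in which $Q_{p_0, \mathcal{H}(p_0)}$ blows up on the $\omega$-limit sets of $-\Gamma$. Following the remark after \Cref{inthesenseofd}, for the super-solution I would still perturb by $-\eps \log Q_{p_0,\mathcal{H}(p_0)}$ on its finite domain $\mathcal{D}(Q_{p_0,\mathcal{H}(p_0)})$, combined with a cut-off that prevents $v^\eps$ from approaching the singular locus; the same cancellation then goes through. For the sub-solution, the perturbation should be built from the singular eigen-measure $\mu$ of \Cref{inthesenseofd} by a regularization concentrated near $\mathrm{supp}\,\mu$, forcing $v^\eps$ onto this support in the limit. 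The key identity $\frac{M(w)}{\tM(w)} + \mathcal{H}(p_0) - w \cdot p_0 = 0$ from \Cref{utile2} then replaces the spectral identity used in the regular case and closes the argument.

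The hardest step will be designing the perturbed test functions in the singular case as genuine $C^1$ objects, and ensuring that $(t^\eps, x^\eps, v^\eps)$ localizes on the correct subset of $V$ (either a zero of $\Gamma$ or a periodic orbit) rather than at an interior blow-up point of $Q_{p_0, \mathcal{H}(p_0)}$. This is precisely where the Poincaré-Bendixson condition (\ref{Poincare}) and the mixing assumption (\ref{mixing}) enter, via \Cref{utile} and \Cref{utile2}.
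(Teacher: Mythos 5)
Your overall framework (half-relaxed limits plus Evans's perturbed test function) matches the paper, and your regular-case argument for (i) and (ii) is essentially correct. But you miss the two technical devices that actually make the singular case work, and your proposed substitutes would face genuine obstacles.

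For the super-solution, the paper never uses $H=\mathcal{H}(p^0)$ at all. It perturbs by $\psi^\eps_H := \psi + \eps\eta_H$ with $\eta_H := -\log Q_{p^0,H}$ for an arbitrary $H>\mathcal{H}(p^0)$, and only at the very end lets $H \searrow \mathcal{H}(p^0)$. The point is that for strict $H>\mathcal{H}(p^0)$ one has $\int_V\tM' Q'_{p^0,H}\,d\nu' \le 1 <+\infty$ by the very definition \eqref{hamiltonian}, so $Q_{p^0,H}$ is an admissible bounded $C^1$ corrector regardless of whether $p^0$ is singular or not. This gives $\partial_t\psi(t^0,x^0)+H\ge 0$ for every $H>\mathcal{H}(p^0)$, hence the desired inequality. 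Your cut-off idea for the singular case would, by contrast, introduce boundary error terms from the truncation that interact with the $\eps^{-1}\Gamma\cdot\nabla_v$ part of the equation; controlling those as $\eps\to0$ is not evidently possible and is exactly what the $H>\mathcal{H}(p^0)$ trick avoids.

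For the sub-solution at a singular $p^0$, the paper does not build a perturbed test function at all — certainly not by mollifying the singular eigen-measure $\mu$. It instead uses \Cref{utile} to locate a $v_0$ that is either a zero of $\Gamma$ or sits on a periodic orbit, and then argues directly at the $\varphi^\eps$ level. In the fixed-point case, one simply considers $(t,x)\mapsto\varphi^\eps(t,x,v_0)-\psi(t,x)$: since $\Gamma(v_0)=0$, the dangerous term $\frac{\Gamma(v_0)}{\eps}\cdot\nabla_v\varphi^\eps$ vanishes \emph{identically}, so no corrector is needed. The right-hand side of \eqref{main} is bounded above by $M(v_0)/\tM(v_0)$, and the identity $\frac{M(v_0)}{\tM(v_0)}+\mathcal{H}(p^0)-v_0\cdot p^0=0$ from \Cref{utile2} closes the argument. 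In the periodic-orbit case the same singular term is killed by averaging $\frac1t\int_0^t(\cdot)\,ds$ along the orbit, because $\frac1t\int_0^t\frac{\Gamma(\phi^{v_0}_s)}{\eps}\cdot\nabla_v\varphi^\eps\,ds = -\frac{\varphi^\eps(\phi^{v_0}_t)-\varphi^\eps(v_0)}{t\eps}\to 0$. Your proposal to regularize $\mu$ and hope $v^\eps$ localizes on $\mathrm{supp}\,\mu$ would reintroduce an order-$\delta$ defect in the spectral identity competing with the $\eps^{-1}$ scale; the paper's direct evaluation sidesteps this entirely. The Poincaré–Bendixson and mixing hypotheses enter precisely to guarantee that such a $v_0$ (or its orbit representative $w$) exists with $\Gamma\cdot\nabla_v$ harmless along it.
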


\begin{proof}

(i) Let $\psi$ be a test function such that $\vp_*-\psi$ has a local minimum at point $(t^0,x^0)\in\R_+^*\times\R^d$. We set $p^0:=\na_x \psi(t^0,x^0)$. For all $H\geq\mathcal{H}(p^0)$, let us define $\psi^{\eps}_H:=\psi+\eps\eta_H$, where $\eta_H:=-\mathrm{log}(Q_{p^0,H})$ and
\begin{equation}\label{QQ}
Q_{p^0,H}(v):=\int_0^{+\infty}\frac{M(\phi^v_t)}{\tM(\phi^v_t)}\mathrm{exp}\left(-\int_0^t \left(\frac{M(\phi^v_s)}{\tM(\phi^v_s)}+H-\phi^v_s\cdot p^0\right)ds\right)dt, \quad\forall v\in V.
\end{equation}
For all $H>\mathcal{H}(p^0)$, by construction of $\eta_H$, we have
\begin{equation*}
\int_V \tM'e^{-\eta'_H}d\nu(v')= \int_V \tM'Q'_{p^0,H}d\nu(v')<\int_V \tM'Q'_{p^0,\mathcal{H}(p^0)}d\nu(v')= 1,\quad\mbox{if }p^0\notin\mathrm{Sing}(M,\Gamma),
\end{equation*}
or
\begin{equation*}
\int_V \tM'e^{-\eta'_H}d\nu(v')= \int_V \tM'Q'_{p^0,H}d\nu(v')<\int_V \tM'Q'_{p^0,\mathcal{H}(p^0)}d\nu(v')\leq 1,\quad\mbox{if }p^0\in\mathrm{Sing}(M,\Gamma).
\end{equation*}
Moreover, $Q_{p^0,H}\in C^1(V)$ and
\begin{equation}\label{QQQ}
Q_{p^0,H}\left(\frac{M(v)}{\tM(v)}+H-v\cdot p^0\right)+\G\cdot \na_v Q_{p^0,H}=\frac{M(v)}{\tM(v)},\quad \forall v\in V.
\end{equation}

By uniform convergence of $\psi_H^\eps$ toward $\psi$ and by the definition of $\varphi_*$, the function $\vp^\eps-\psi^\eps_H$ has a local minimum located at a point $(t^\eps,x^\eps,v^\eps)\in\R_+^*\times\R^d\times V$, satisfying $t^\eps\to t^0$ and $x^\eps \to x^0$. The extremal property of $(t^\eps,x^\eps,v^\eps)$ implies that
\begin{equation*}
\partial_t\varphi^{\eps}(t^\eps,x^\eps,v^\eps)=\partial_t\psi^{\eps}_H(t^\eps,x^\eps,v^\eps),\quad \nabla_x\varphi^{\eps}(t^\eps,x^\eps,v^\eps)=\nabla_x\psi^{\eps}_H(t^\eps,x^\eps,v^\eps).
\end{equation*}
Moreover, we have
\begin{equation*}
\Gamma(v^\eps)\cdot\nabla_v\varphi^{\eps}(t^\eps,x^\eps,v^\eps)=\Gamma(v^\eps)\cdot\nabla_v\psi^{\eps}_H(t^\eps,x^\eps,v^\eps).
\end{equation*}
Indeed, if $v^\eps\in \overset{\circ}{V}$ or $\Gamma(v^\eps)=0$, then the result is trivial. If $v^\eps\in \partial V$ and $\Gamma(v^\eps)\neq 0$, since $\Gamma(v)\cdot d\overrightarrow{S}(v)=0$ for all $v\in \partial V$, there exists $v_0\in V$, $v_1 \in V$ and $\delta>0$ such that
\begin{equation*}
\begin{cases}
\phi^{v^\eps}_s\in V,& \forall s\in [-\delta,\delta],\\
\phi^{v^\eps}_{-\delta} = v_0,\\
\phi^{v^\eps}_{\delta}=v_1.
\end{cases}
\end{equation*}
The extremal property of $(t^\eps,x^\eps,v^\eps)$ now implies that 
\begin{equation*}
\Gamma(v^\eps)\cdot \nabla_v(\vp^\eps-\psi^\eps_H)(t^\eps,x^\eps,v^\eps)=-\left. \frac{d}{ds}\left(\vp^\eps-\psi^\eps_H\right)(t^\eps,x^\eps,\phi^{v^\eps}_s)\right\vert_{s=0}=0.
\end{equation*}

Finally, since $V$ is a compact set, we know that there exists $v^*\in V$ and a subsequence of $(v^\eps)_\eps$, which we will not relabel, such that $v^\eps\to v^*$.

At point $(t^\eps,x^\eps,v^\eps)$, we have:
\begin{eqnarray}
\partial_t \psi + v^\eps \cdot \nabla_x \psi +\Gamma(v^\eps)\cdot \na_v \eta_H & = & \partial_t \psi^\eps_H + v^\eps\cdot \na_x \psi^\eps_H +\frac{\Gamma(v^\eps)}{\eps}\cdot \na_v \psi^\eps_H \nonumber \\
&\geq&\partial_t \vp^\eps + v^\eps \cdot \na_x \vp^\eps +\frac{\G(v^\eps)}{\eps}\cdot \nabla_v \vp^\eps \label{inneqa}\\
&=&\frac{M(v^\eps)}{\tM(v^\eps)}\left(1-\int_V \tM'e^{\frac{\vp^\eps-\vp^{\eps'}}{\eps}}d\nu(v')\right).\nonumber
\end{eqnarray}
By the minimal property of $(t^\eps,x^\eps,v^\eps)$, we can estimate the right-hand side of the last equation, such that
\begin{eqnarray}
\partial_t \psi+v^\eps\cdot\na_x\psi+\G(v^\eps)\cdot\na_v \eta_H & \geq& \frac{M(v^\eps)}{\tM(v^\eps)}\left(1-\int_V \tM(v')e^{\eta_H(v^\eps)-\eta_H(v')}d\nu(v')\right)\label{inneqb}\\
&\geq & \frac{M(v^\eps)}{\tM(v^\eps)}\left(1-e^{\eta_H(v^\eps)}\right)\label{inneqc}\\
&=&\frac{M(v^\eps)}{\tM(v^\eps)}\left(1-\frac{1}{Q_{p^0,H}(v^\eps)}\right),\nonumber
\end{eqnarray}
so we have at point $(t^\eps,x^\eps,v^\eps)$,
\begin{equation*}
Q_{p^0,H}(v^\eps)\left(\frac{M(v^\eps)}{\tM(v^\eps)}-\partial_t \psi-v^\eps\cdot \nabla_x\psi\right)+\Gamma(v^\eps)\cdot\nabla_v Q_{p^0,H}(v^\eps)\leq \frac{M(v^\eps)}{\tM(v^\eps)}.
\end{equation*}
Taking the limit $\eps\to 0$, we get at point $(t^0,x^0,v^*)$,
\begin{equation}\label{finQQ}
Q_{p^0,H}(v^*)\left(\frac{M(v^*)}{\tM(v^*)}-\partial_t \psi-v^*\cdot p^0\right)+\Gamma(v^*)\cdot\nabla_v Q_{p^0,H}(v^*)\leq \frac{M(v^*)}{\tM(v^*)}.
\end{equation}
 
Combining \eqref{QQQ} and \eqref{finQQ} at $v=v^*$, we get
\begin{equation*}
\partial_t \psi(t^0,x^0)+H \geq 0.
\end{equation*}
Since this is true for any $H>\mathcal{H}(p^0)$, we finally have
\begin{equation*}
\partial_t \psi(t^0,x^0)+\mathcal{H}(p^0) \geq 0,
\end{equation*}
which proves (i).

\medskip

(ii) Let $\psi$ be a test function such that $\varphi^*-\psi$ has a global strict maximum at a point $(t^0,x^0)\in\R_+^*\times\R^d$. We still denote $p^0=\na_x \psi(t^0,x^0)$.

\medskip

{\bf \# First case: $p^0 \notin\mathrm{Sing}\,\left(M,\Gamma\right)$} 

\medskip

Then, from the very definition of $\mathrm{Sing}(M,\Gamma)$ (check \Cref{defiprincipale}), there exists $H_0<\mathcal{H}(p^0)$ such that, for all $H_0<H<\mathcal{H}(p^0)$,
\begin{equation}\label{thanksto}
+\infty>\int_V \tM'Q'_{p^0,H}d\nu(v')>\int_V \tM'Q'_{p^0,\mathcal{H}(p^0)}d\nu(v')=1,
\end{equation}
using the same notation as earlier. We can then conclude using the same arguments as in the proof of (i). We emphasize that the Estimates (\ref{inneqa}) and (\ref{inneqb}) are reverted in this "maximum" case and that (\ref{inneqc}) is reverted thanks to (\ref{thanksto}).

\medskip

{\bf \# Second case: $p^0 \in\mathrm{Sing}\,\left(M,\Gamma\right)$} 

\medskip

%
%

Thanks to \Cref{utile}, there exists $v_0\in V$ such that $Q_{p,\mathcal{H}(p)}(v_0)=+\infty$ and that either $v_0$ is a fixed point of the flow of $-\Gamma$, \em i.e. \em $\Gamma(v_0)=0$, or $v_0$ belongs to a periodic orbit of the flow.

Suppose that $v_0$ is a fixed point, then, after \Cref{utile}, we have
\begin{equation}\label{fatal}
\frac{M(v_0)}{\tM(v_0)}+\mathcal{H}(p)-v_0\cdot p=0.
\end{equation}
Moreover, the function $(t,x)\mapsto \vp^\eps(t,x,v_0) - \psi(t,x)$ has a local maximum at a point $(t^\eps,x^\eps)$ and, by definition of $\vp^*$, we have $t^\eps\to t^0$ and $x^\eps \to x^0$. By the maximal property of $(t^\eps,x^\eps)$, we have at point $(t^\eps,x^\eps,v_0)$,
\begin{eqnarray*}
\partial_t \psi (t^\eps,x^\eps)+v_0 \cdot \na_x \psi(t^\eps,x^\eps) + 0&=&\partial_t \psi (t^\eps,x^\eps)+v_0 \cdot \na_x \psi(t^\eps,x^\eps) + \frac{\Gamma(v_0)}{\eps}\cdot \na_v \varphi^\eps(t^\eps,x^\eps,v_0)\\
&=&\frac{M(v_0)}{\tM(v_0)}\int_V M' \left(1-e^{\frac{\vp^\eps-\vp'^\eps}{\eps}}\right)d\nu'\\
&\leq&\frac{M(v_0)}{\tM(v_0)}.
\end{eqnarray*}
Taking the limit $\eps\to0$ and recalling, (\ref{fatal}), we get
\begin{equation*}
\partial_t \psi(t^0,x^0)+\mathcal{H}\left(\na_x\psi(t^0,x^0)\right)\leq 0,
\end{equation*}
which proves that $\vp^*$ is a viscosity subsolution of (\ref{HJ}).

Suppose now that $v_0$ belongs to a periodic orbit. At point $(t,x,\phi^{v_0}_s)$, we have
\begin{equation*}
\partial_t \vp^\eps(\phi^{v_0}_s)+\phi^{v_0}_s\cdot \na_x \vp^\eps(\phi^{v_0}_s)+\frac{\Gamma(\phi^{v_0}_s)}{\eps}\cdot \na_v \vp^\eps(\phi^{v_0}_s)\leq\frac{M(\phi^{v_0}_s)}{\tM(\phi^{v_0}_s)}
\end{equation*}
Applying $\underset{t\to +\infty}{\mathrm{lim}}\displaystyle{\int_0^t (\cdot) ds}$ to the latter expression gives
\begin{equation*}
\partial_t\vp^\eps(t,x,w)+w\cdot \nabla_x \vp^\eps(t,x,w)\leq \frac{M(w)}{\tM(w)},
\end{equation*}
where $w(v_0)$ is the representative of the orbit defined by the mixing property (\ref{mixing}). Indeed, 
\begin{eqnarray*}
\frac{1}{t}\int_0^t \frac{\Gamma(\phi^{v_0}_s)}{\eps}\cdot \nabla_v \vp^\eps(\phi^{v_0}_s)ds=-\frac{1}{t}\int_0^t \frac{\overset{\cdot}{\phi^{v_0}_s}}{\eps}\cdot \nabla_v \vp^\eps(\phi^{v_0}_s)ds
=-\frac{\vp^\eps(\phi^{v_0}_t)-\vp^\eps(\phi^{v_0}_0)}{t\eps}\underset{t\to+\infty}{\longrightarrow}0.
\end{eqnarray*}
After \Cref{utile2}, we know that $\frac{M(w)}{\tM(w)}+\mathcal{H}(p)-w\cdot p=0$ so we can conclude as in the previous case by considering the function $(t,x)\mapsto \vp^\eps(t,x,w) - \psi(t,x)$.

\end{proof}

\medskip

We can now conclude the proof of \Cref{thm}. 

\begin{proof}[{\bf Proof of \Cref{thm}}]

We refer to Section 4.4.5 in \cite{barles_solutions_1994} and  Theorem B.1 in \cite{evans_pde_1989} for arguments giving strong uniqueness (which means that there exists a comparison principle for sub- and super-solution) of \Cref{HJ} in the viscosity sense. We emphasize that the lipschitz-continuity proven in \Cref{lipschitz} is sufficient for these results. Thanks to \Cref{viscosity}, as $\vp^*$ and $\vp_*$ are respectively a sub- and a super-solution of the Hamilton-Jacobi \Cref{HJ}, the comparison principle yields $\vp^*\leq \vp_*$. However, from their definitions, it is clear that $\vp^*\geq \vp_*$. Hence, the function $\varphi^0:=\varphi^*=\varphi_*$ is the viscosity solution of \Cref{HJ} and $(\vp^\eps)_\eps$ converges uniformly locally as $\eps\to0$ to $\vp^0$, which concludes the proof.

\end{proof}

\section*{Acknowledgement}

The author would like to thank Julien Vovelle and Vincent Calvez for presenting this problem to him and for the many discussions that followed. The author would also like to thank Emeric Bouin and Serge Parmentier, who were very helpful. The author has also received help from Nicolas Champagnat and J\'er\^ome Coville who, when reviewing the author's PhD thesis, gave important advice on the redaction of the proof. This work was performed during the author's PhD in Universit\'e Claude Bernard Lyon 1.

\bibliographystyle{plain}

\bibliography{biblio}

\begin{thebibliography}{10}

\bibitem{alt_biased_1980}
Wolgang Alt.
\newblock Biased random walk models for chemotaxis and related diffusion
  approximations.
\newblock {\em Journal of Mathematical Biology}, 9(2):147--177, April 1980.

\bibitem{barles_solutions_1994}
Guy Barles.
\newblock {\em Solutions de viscosit{\'e} des {\'e}quations de
  {Hamilton}-{Jacobi}}.
\newblock Number~17 in Math{\'e}matiques \& applications. Springer-Verlag,
  Paris ; New York, 1994.

\bibitem{barles_exit_1988}
Guy Barles and Beno{\^i}t Perthame.
\newblock Exit {Time} {Problems} in {Optimal} {Control} and {Vanishing}
  {Viscosity} {Method}.
\newblock {\em SIAM Journal on Control and Optimization}, 26(5):1133--1148,
  September 1988.

\bibitem{bouin_hamilton-jacobi_2015}
Emeric Bouin.
\newblock A {Hamilton}-{Jacobi} approach for front propagation in kinetic
  equations.
\newblock {\em Kinetic and Related Models}, 8(2):255--280, March 2015.

\bibitem{bouin_spreading_2017}
Emeric Bouin and Nils Caillerie.
\newblock Spreading in kinetic reaction-transport equations in higher velocity
  dimensions.
\newblock {\em arXiv:1705.02191}, 2017.

\bibitem{bouin_kinetic_2012}
Emeric Bouin and Vincent Calvez.
\newblock A kinetic eikonal equation.
\newblock {\em Comptes Rendus Mathematique}, 350(5):243--248, March 2012.

\bibitem{bouin_large_2016}
Emeric Bouin, Vincent Calvez, Emmanuel Grenier, and Gr{\'e}goire Nadin.
\newblock Large deviations for velocity-jump processes and non-local
  {Hamilton}-{Jacobi} equations.
\newblock {\em arXiv:1607.03676}, 2016.

\bibitem{bouin_invasion_2012}
Emeric Bouin, Vincent Calvez, Nicolas Meunier, Sepideh Mirrahimi, Beno{\^i}t
  Perthame, Ga{\"e}l Raoul, and Rapha{\"e}l Voituriez.
\newblock Invasion fronts with variable motility: {Phenotype} selection,
  spatial sorting and wave acceleration.
\newblock {\em Comptes Rendus Mathematique}, 350(15):761 -- 766, 2012.

\bibitem{bouin_propagation_2015}
Emeric Bouin, Vincent Calvez, and Gr{\'e}goire Nadin.
\newblock Propagation in a {Kinetic} {Reaction}-{Transport} {Equation}:
  {Travelling} {Waves} {And} {Accelerating} {Fronts}.
\newblock {\em Archive for Rational Mechanics and Analysis}, 217(2):571--617,
  August 2015.

\bibitem{bouin_hamiltonjacobi_2015}
Emeric Bouin and Sepideh Mirrahimi.
\newblock A {Hamilton}{\textendash}{Jacobi} approach for a model of population
  structured by space and trait.
\newblock {\em Communications in Mathematical Sciences}, 13(6):1431--1452,
  2015.

\bibitem{caillerie_large_2017-1}
Nils Caillerie.
\newblock Large deviations of a velocity jump process with a
  {Hamilton}{\textendash}{Jacobi} approach.
\newblock {\em Comptes Rendus Mathematique}, 355(2):170--175, February 2017.

\bibitem{caillerie_stochastic_2017}
Nils Caillerie.
\newblock {\em Stochastic and deterministic kinetic equations in the context of
  mathematics applied to biology}.
\newblock PhD thesis, Universit{\'e} de Lyon, July 2017.

\bibitem{calvez_chemotactic_2016}
Vincent Calvez.
\newblock Chemotactic waves of bacteria at the mesoscale.
\newblock {\em arXiv:1607.00429}, July 2016.

\bibitem{coville_singular_2013-1}
J{\'e}r{\^o}me Coville.
\newblock Singular measure as principal eigenfunction of some nonlocal
  operators.
\newblock {\em Applied Mathematics Letters}, 26(8):831--835, 2013.

\bibitem{evans_perturbed_1989}
Lawrence~C. Evans.
\newblock The perturbed test function method for viscosity solutions of
  nonlinear {PDE}.
\newblock {\em Proceedings of the Royal Society of Edinburgh Section A:
  Mathematics}, 111(3-4):359--375, January 1989.

\bibitem{evans_pde_1987}
Lawrence~C. Evans and Panagiotis~E. Souganidis.
\newblock {\em A {PDE} {Approach} to {Geometric} {Optics} for {Certain}
  {Semilinear} {Parabolic} {Equations}}.
\newblock Lefschetz {Center} for {Dynamical} {Systems}. Division of Appl.
  Math., Brown Univ., 1987.

\bibitem{evans_pde_1989}
Lawrence~C. Evans and Panagiotis~E. Souganidis.
\newblock A {PDE} {Approach} to {Geometric} {Optics} for {Certain} {Semilinear}
  {Parabolic} {Equations}.
\newblock {\em Indiana University Mathematics Journal}, 38(1):141--172, 1989.

\bibitem{freidlin_geometric_1986}
Mark~I. Freidlin.
\newblock Geometric {Optics} {Approach} to {Reaction}-{Diffusion} {Equations}.
\newblock {\em SIAM Journal on Applied Mathematics}, 46(2):222--232, April
  1986.

\bibitem{gandon_hamiltonjacobi_2017}
Sylvain Gandon and Sepideh Mirrahimi.
\newblock A {Hamilton}{\textendash}{Jacobi} method to describe the evolutionary
  equilibria in heterogeneous environments and with non-vanishing effects of
  mutations.
\newblock {\em Comptes Rendus Mathematique}, 355(2):155--160, February 2017.

\bibitem{mirrahimi_asymptotic_2015}
Sepideh Mirrahimi and Beno{\^i}t Perthame.
\newblock Asymptotic analysis of a selection model with space.
\newblock {\em Journal de Math{\'e}matiques Pures et Appliqu{\'e}es},
  104(6):1108--1118, December 2015.

\bibitem{mirrahimi_time_2015}
Sepideh Mirrahimi, Beno{\^i}t Perthame, and Panagiotis~E. Souganidis.
\newblock Time fluctuations in a population model of adaptive dynamics.
\newblock {\em Annales de l'Institut Henri Poincare (C) Non Linear Analysis},
  32(1):41--58, January 2015.

\bibitem{saragosti_directional_2011}
Jonathan Saragosti, Vincent Calvez, Nikolaos Bournaveas, Beno{\^i}t Perthame,
  Axel Buguin, and Pascal Silberzan.
\newblock Directional persistence of chemotactic bacteria in a traveling
  concentration wave.
\newblock {\em Proceedings of the National Academy of Sciences of the United
  States of America}, 108(39):16235--16240, September 2011.

\bibitem{stroock_stochastic_1974}
Daniel~W. Stroock.
\newblock Some stochastic processes which arise from a model of the motion of a
  bacterium.
\newblock {\em Zeitschrift f{\"u}r Wahrscheinlichkeitstheorie und Verwandte
  Gebiete}, 28(4):305--315, 1974.

\end{thebibliography}

\end{document}